\newtheorem{theorem}{Theorem}
\title{\huge Optimal Intervention Strategies and Cost-effectiveness Analysis study of Tuberculosis with reference to  TPT, Malnutrition and Diabetes Management }
\author[a]{\textit{Mr. Sushil Chhetri}}
\author[*a,b]{\textit{Dr. Krishna Kiran Vamsi Dasu}}
\author[ c]{\textit{Mrs. K N Kavya}} 
\author[ d,b]{\textit{Dr. Sharath B N}} 
\author[e,b]{\textit{Dr. Uma Shankar S}} 
\author[e,b]{\textit{Dr. Somashekar N}}
\author[ e,b ]{\textit{Dr. Vineet Kumar Chadda}}
\affil[a]{ \textit{Department of Mathematics and Computer Science, Sri Sathya Sai Institute of Higher Learning, Puttaparthi, India.}}
\affil[b]{\textit{Center for Excellence in Mathematical Biology, SSSIHL,  Puttaparthi, India}}
\affil[c]{\textit{Department of Mathematics, CHRIST (Deemed to be University), Bengaluru, India}}
\affil[d]{  \textit{ESI Medical College, Bengaluru, Karnataka, India}}
\affil[e]{ \textit{National Tuberculosis Institute - NTI, Bengaluru, Karnataka, India}}
\affil[*]{  \textit{Corresponding author}}
\affil[  ]{ \textit{sushilchhetri@sssihl.edu.in, dkkvamsi@sssihl.edu.in, sharath.burug@gmail.com, shankars.2017@gmail.com, bsoma@gmail.com, vineet2chadha@gmail.com}}
\date{}
\begin{document}

\maketitle

\begin{abstract} {{
Tuberculosis remains a significant global health challenge, with millions of new cases reported annually. Recent studies suggest that expanding the accessibility of TB intervention programs can lead to a substantial decrease in both TB incidence and prevalence. This paper initiates by examining a deterministic mathematical model for TB transmission, aiming to analyze the underlying dynamics. Subsequently, an optimal control problem is formulated to enhance TB control measures, encompassing Tuberculosis Preventive Treatment (TPT) and other initiatives targeting malnutrition and diabetes. Through simulation studies, the effectiveness of the control program is assessed. The model dynamics allow us to identify the pseudo-prevalence and incidence. To determine the potential long-term trajectory of TB and to acquire future projections a cost-effectiveness analysis is performed using ACER, AIR, ICER, and four quadrants to compare competing interventions. In conclusion, this work   provides valuable insights into TB and strategies for its control and cost effectiveness.}}
\end{abstract}
{ \bf {keywords:} } Optimal control; Incidence; Pseudo-prevalence; Hamiltonian;Cost effectiveness;Threshold


\section{Introduction} \label{Intro}

 Tuberculosis is caused by tubercle bacilli, a vital and enduring global health disease transferred through the air. The bacterium is mostly pathogenic in the lung, and its symptom is coughing. When individuals become infected, they enter a latent stage, the duration of which varies from person to person. For many, this latent period can span 20 to 30 years, during which they harbor the M. tuberculosis bacterium without developing active TB. Latently infected individuals with TB are asymptomatic and do not transmit the disease, but they may transition to active TB through either endogenous reactivation or exogenous reinfection \cite{small2001management}.  \\


Even today the global burden of TB remains substantial. According to the World Health Organization's Global Tuberculosis Report of 2016, TB ranks among the top 10 causes of mortality worldwide, claiming over 1.7 million lives annually \cite{TBREPORT}. The worldwide incidence of TB in 2019 numbered approximately 10 million \cite{world2020end}. Shockingly, this ancient disease continues to afflict millions globally, with an estimated one-third of the world's population carrying latent Mycobacterium tuberculosis infections, as highlighted in \cite{bleed2001world}. The National TB prevalence survey conducted in India from 2019 to 2021 estimated a crude prevalence of TB infection ($T_l$) at 31.3\% among individuals aged 15 years and above \cite{TBREPORT}. According to WHO Global TB report 2022, the incidence rate in India was 210 (178-244) per 100,000 population, and according to the in-country model, the incidence rate was 196 (171-228) per 100,000 population. The TB notification rate was 172 per 100,000 population, and TB treatment success rate was 85\%, respectively. \\

The co-infection of tuberculosis and diabetes is a significant concern, with historical recognition dating back to the late 1883 . The study  by Oscarsson \cite{oscarsson1958ii}, underscores the prevalence of this co-infection, indicating that around 50\% of individuals with tuberculosis also had diabetes. This finding highlights the importance of considering diabetes as a comorbidity in the management and treatment of tuberculosis, as it can influence the course and outcome of the disease. In the study \cite{jacob2023cost}, it was found that the median yearly direct expenditure for managing diabetes among the participants amounted to INR 7540 in india. Similarly, the relationship between malnutrition and tuberculosis is closely intertwined. Malnutrition weakens the immune system, making individuals more susceptible to tuberculosis infection and hindering their ability to fight off the disease. Conversely, tuberculosis can exacerbate malnutrition by causing weight loss, decreased appetite, and nutrient malabsorption. This vicious cycle of malnutrition and tuberculosis creates a challenging scenario where addressing both conditions concurrently is essential for successful treatment outcomes. \\

In  this paper, our initial focus is on a deterministic mathematical model for tuberculosis (TB) transmission. We conduct preliminary analysis to gain insights into the fundamental dynamics of the disease. The optimal control problem is subsequently formulated and examined, which includes the implementation of TB control intervention strategies like Tuberculosis Preventive Treatment (TPT), as well as other TB control programs addressing malnutrition and diabetes. Our objective is to gain insights into the potential of these programs in mitigating the transmission of tuberculosis and enhancing overall public health. The pseudo-prevalence and incidence are determined by analyzing model dynamics. By extrapolating these values to future scenarios, valuable insights can be obtained regarding the long-term trajectory of tuberculosis (TB).
 \\

In addition, a cost-effectiveness analysis is conducted to obtain a thorough comprehension of the economic consequences associated with the implementation of interventions for tuberculosis (TB) control. This analysis facilitates the assessment of costs linked to different strategies for tuberculosis (TB) prevention, diagnosis, and treatment, in comparison to their efficacy in mitigating the burden of TB. The objective of conducting cost-effectiveness analysis is to provide valuable insights for decision-making processes. This analysis will help guide the allocation of limited resources in order to optimize the impact on tuberculosis control efforts and ultimately enhance public health outcomes.
\\

The organization of this paper is as follows. In the next section we frame the initial model and discuss its positivity \& boundedness and later do the stabilty analyis for the same. Further in section 3 we frame an optimal control problem based on the initial model and the  control interventions. We perform the numerical studies and propose the optimal control strategies. In section 4 we calculate the Pseudo-Prevalence and Incidence. Further in section 5 we perform the cost-effective analysis and end the work with discussions and conclusions section.

\section{The Initial Model }
Mathematical modelling plays a crucial role in understanding the complexities of infectious diseases. They provide researchers 
  with the ability to model a variety of scenarios and make predictions regarding the spread of illnesses, which significantly aids in the development of efficient control techniques. These models help to make educated decisions regarding public health by providing insights into the progression of diseases and incorporating data on population dynamics, transmission rates, and other vital aspects. \\

\par The present study partitions the population into five distinct compartments, which helps to understand the population dynamics comprehensively. The symbol $U(t)$ represents the total population susceptible to tuberculosis. This population comprises a complex blend of individuals at risk of infection, as well as those with the potential to develop a sick or latent state. $TBI(t)$ are the population with asymptomatic TB infection, posing no transmission risk. Conversely, individuals actively exhibiting TB symptoms and capable of transmitting the disease constitute the $TBD(t)$ compartment. Those undergoing TB treatment are represented by $TT(t)$, which includes active cases receiving medical care. Finally, individuals who have completed treatment and are TB-free are denoted by the $R(t)$ compartment.\\

The total population at a time t is given by $N(t) = U(t) + TBI(t) + TBD(t) + TT(t) + R(t)$.The transmission dynamics of the tuberculosis is presented below. In this flow diagram we also include the control interventions such as TPT ($\mu_{TPT}(t)$), Malnutrition ($\mu_{MN}(t)$) and Diabetes  ($\mu_{D}(t)$) management.  To make the notations simpler the variables are renamed as follows:
 $TBI = T_l$, $TBD = T_d$, $TT = T_t$, $\mu_{TPT} = \mu_T$ and $\mu_{MN} = \mu_M$ respectively.

\begin{figure}[ht] 
    \centering
    \includegraphics[width=12cm, height=6cm]{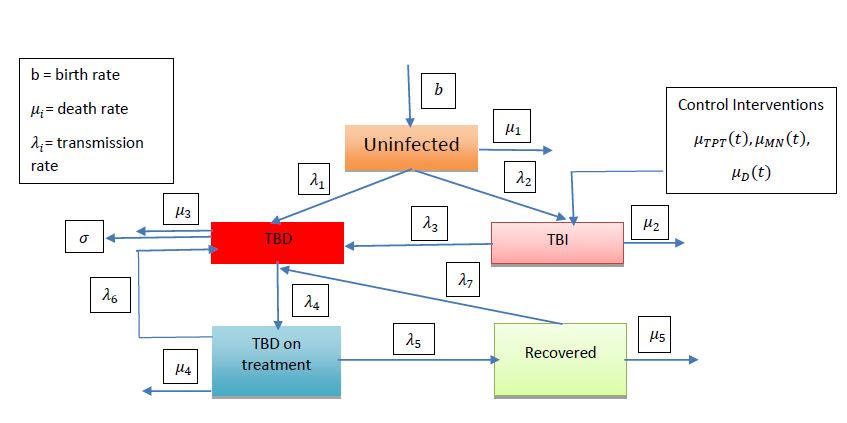}
    \caption{Disease flow diagram for the transmission dynamics of tuberculosis.}
\end{figure}

  Table \ref{tab:variables-parameters} outlines key parameters, control variables and initial conditions crucial for modeling the dynamics of tuberculosis (TB). These parameters include rates such as $\lambda_1$ and $\lambda_2$, denoting the likelihood of uninfected individuals acquiring TB infection and developing active disease, respectively. Additionally, rates like $\lambda_3$ illustrate the progression from latent TB infection to active disease, while $\lambda_4$ and $\lambda_5$ signify the rates of initiating treatment for TB and recovering from the disease during treatment. Other factors, such as the default rate from TB treatment $\lambda_6$ and the recurrence rate of TB in recovered individuals $\lambda_7$, are also included. Complementing these rates are variables like the natural birth rate (b), the natural cure rate of TB ($\sigma$), and death rates ($\mu_1-\mu_5$) for various disease compartments. Moreover, initial population counts for different stages of TB, including uninfected, infected, diseased, under treatment, and recovered individuals, provide a foundational framework for simulating TB transmission and treatment dynamics.\\
  
  Following the above descriptions,  the differential equations representing the five compartmental deterministic models used in studying the dynamics of TB is represented as
\numberwithin{equation}{section} 
\begin{align}
   & \frac{dU}{dt} = b - (\lambda_1 + \mu_1)U - \lambda_2 T_l U && \label{begineq}\\
   & \frac{dT_l}{dt} = \lambda_2T_l U - (\lambda_3 + \mu_{2})T_l && \ \label{sec11equ21} \\
   & \frac{dT_d}{dt} = \lambda_1U + \lambda_3T_l - (\lambda_4 + \mu_3 + \sigma)T_d + \lambda_6T_t + \lambda_7R &&  \label{sec11equ31}\\
   & \frac{dT_t}{dt} = \lambda_4T_d - (\lambda_5 + \lambda_6 + \mu_4)T_t &&  \label{sec11equ45} \\
   & \frac{dR}{dt} = \lambda_5 T_t - (\lambda_7 + \mu_5)R &&  \label{endeq}
\end{align}
with the initial conditions U(0)$>$0, $T_l$ (0) $>$ 0, $T_d(0)$ $>$ 0, $T_t(0)$ $>$ 0 and R(0) $>$ 0.

\begin{table}[ht]
   \renewcommand{\arraystretch}{0.5}  
   \begin{tabular}{|c|p{130mm}|}
    \hline
     \textbf{Parameter} & \textbf{Biological Meaning}  \\
     \hline
      U &  Uninfected Population\\
         \hline
      $T_l$ &Tuberculosis latently infected population\\
      \hline
      $T_d$ & Tuberculosis diseased population\\
      \hline
      $T_t$ & Tuberculosis diseased population undergoing treatment\\
      \hline
      $R$ & Recovered tuberculosis diseased population\\
      \hline
       MN&Malnutrition\\
       \hline
      TPT&Tuberculosis preventive treatment.\\
      \hline
      D &Diabetes\\
      \hline
      $\lambda_1$,$\lambda_2$ & Rate at which an uninfected individual getting TBI,TBD respectively.\\
      \hline
      $\lambda_3$ & Rate at which a latently TB infected individual getting TBD.\\
     \hline
      $\lambda_4$& Rate at which a TBD individual is put on treatment.\\
      \hline
      $\lambda_5$ & Rate at which a TBD individual on treatment 
      gets recovered.\\
      \hline
      $\lambda_6$& Rate of default of an individual from TBD treatment\\
      \hline
      $\lambda_7$& Rate at which a recovered individual getting TBD (Recurrence rate)\\
      \hline
      $b$ & Natural birth rate\\
      \hline
      $\sigma$ &Natural cure rate of an individual having TBD.\\
      \hline
      $\mu_{TPT}(t)$ & Rate at which infected population is decreased  continuously due to TPT given to TB  infected.\\
      \hline
      $\mu_{MN}(t)$&  Rate at which infected population is decreased  continuously due to  mal-nutrition interventions\\
        \hline
      $\mu_D(t)$ & Rate at which infected population is decreased continuously due diabetic interventions.\\
        \hline
      $\mu_1,\mu_2,\mu_3,\mu_4,\mu_5$& Death rate of an individual in compartments $U, T_l,T_d,T_t,R $\newline
      respectively.\\
        \hline
      U(0) & Uninfected population at the initial time.\\
        \hline
      $T_l$(0)   & TB latently infected population at the initial time.\\
    
        \hline
      $T_d$(0)& TB diseased population at the initial time.\\
        \hline
      $T_t$(0) &Population undergoing treatment at the initial time.\\
        \hline
      R(0)  & TB disease recover population at the initial time.\\
       \hline
      \end{tabular}
      \caption{Description of variables and parameters}
    \label{tab:variables-parameters}
\end{table}

\subsection{Positivity and Boundedness of Solutions }
 To ensure the meaningfulness of the model described by equations (\ref{begineq})-(\ref{endeq}), it is imperative to demonstrate the system's positivity and the boundedness of its solutions. This requires proving that when the initial conditions are positive, the solutions remain positive for all future time intervals. The methodology for establishing the positivity and boundedness closely resembles that discussed in \cite{chhetri2021within}. \\

{\large{\bf{Positivity:}}} \\

\begin{align*}
\begin{split}
\frac{dU}{dt}\bigg|_{U = 0} &= b > 0, \quad
\frac{dT_l}{dt}\bigg|_{T_l = 0} = 0 \geq 0, \\
\frac{dT_d}{dt}\bigg|_{T_d = 0} &= \lambda_1U + \lambda_3T_l + \lambda_6T_t + \lambda_7R \geq 0,
\end{split} \\
\frac{dT_t}{dt}\bigg|_{T_t = 0} &= \lambda_4T_d \geq 0,\quad
\frac{dR}{dt}\bigg|_{R = 0} = \lambda_5T_t \geq 0
\end{align*}
 Based on the analysis of the rates given by equations (\ref{begineq})-(\ref{endeq}) on the bounding planes within the non-negative region of the real space, it is evident that all these rates are non-negative. This observation implies that the direction of the vector field is consistently inward on these bounding planes, as indicated by the respective inequalities. Consequently, if a solution initiates from the interior of this region, it will remain confined within it for all time intervals t. This is because the vector field always points inward on the bounding planes. Therefore, we deduce that all solutions of the system represented by equations (\ref{begineq})$-$(\ref{endeq}) maintain positivity for any t $>$0, provided that the initial conditions are positive. \\
 
{\large{\bf{Boundedness:}}} \\

 \indent To establish the boundedness of the solutions to (\ref{begineq})-(\ref{endeq}) we have, the total population as\\ \indent \textit{ N = U + $T_l$ + $T_d$+ $T_t$  + R }. Now, 
 \begin{align*}
 \frac{dN}{dt} &= \frac{dU}{dt} + \frac{dT_l}{dt} + \frac{dT_d}{dt} + \frac{T_t}{dt} + \frac{dR}{dt} \\
 &\leq b-\omega (\textit{U + $T_l$ + $T_d$ + $T_t$ + R }) \\
 &\leq b-\omega N
 \end{align*}
   where,  $\omega = \min\{ \kappa, \mu_1, \mu_2, \mu_4, \mu_5 \} \hspace {1em }and \hspace{1em} \kappa =\min\{\mu_3 + \sigma\}.$ 
 After we integrate, we have 
 \begin{align*}
 N(t)&= \frac{b}{\omega} +Ce^{-\omega t}
 \end{align*}
   where, C is constant. Now  as ${t \to \infty}$,
 \begin{align*}
   lim \hspace{0.5em } sup\hspace{0.5em } N(t)&\leq  \frac{b}{\omega}
 \end{align*}

It is clear that the system (\ref{begineq})-(\ref{endeq}) is positive and bounded.Therefore, all
the solutions of system  with non negative initial values in
the space ${R_+}^5$ are bounded and exist on the interval $[0, \infty)$.\\

Therefore, the biologically feasible region of the model is the following positive invariant set:
\begin{align*}
 \Omega = \left\{ (U(t), T_l(t), T_d(t), T_t(t), R(t)) \ |  N(t) \leq \frac{b}{\omega}, t \geq 0 \right\}
\end{align*}
Based on the above results on positivity and boundedness of the system (\ref{begineq})-(\ref{endeq}), we have the following theorem.

\begin{theorem} \label{th1}
The set $ \Omega = \left\{ (U(t), T_l(t), T_d(t), T_t(t), R(t)) \ \middle|  N(t) \leq \frac{b}{\omega}, t \geq 0 \right\}$ is a positive invariant and an attracting set for system (\ref{begineq})--(\ref{endeq}).
\end{theorem}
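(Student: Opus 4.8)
The plan is to assemble the two facts already established in this subsection — the positivity of solutions and the scalar differential inequality for the total population $N$ — into the two claims of the theorem, namely forward invariance of $\Omega$ and its attractivity. Throughout I would work inside the non-negative orthant $\mathbb{R}_+^5$, which the positivity argument has already shown to be forward invariant: since each component's derivative is non-negative whenever that component vanishes, the vector field points inward on every bounding hyperplane, so a trajectory with non-negative initial coordinates keeps all five state variables non-negative for all $t \geq 0$.

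For positive invariance of $\Omega$, I would start from the inequality $\frac{dN}{dt} \leq b - \omega N$ obtained above and apply a standard comparison (Gr\"onwall-type) argument. Letting $M$ solve the associated linear equation $\dot{M} = b - \omega M$ with $M(0) = N(0)$, one has $M(t) = \frac{b}{\omega} + \left(N(0) - \frac{b}{\omega}\right)e^{-\omega t}$, and the comparison principle yields $N(t) \leq M(t)$ for all $t \geq 0$. If the initial point lies in $\Omega$, that is $N(0) \leq \frac{b}{\omega}$, then the constant $C = N(0) - \frac{b}{\omega}$ is non-positive, so $N(t) \leq \frac{b}{\omega}$ for every $t \geq 0$. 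Combined with coordinatewise non-negativity, this keeps the trajectory inside $\Omega$, giving forward invariance.

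For the attracting property I would use the same bound $N(t) \leq \frac{b}{\omega} + C e^{-\omega t}$ for an arbitrary trajectory with non-negative initial data, where now $C$ may be positive. Since $\omega > 0$, the exponential term decays and we recover $\limsup_{t\to\infty} N(t) \leq \frac{b}{\omega}$, exactly the estimate already recorded. To phrase this as attractivity I would fix $\epsilon > 0$, produce a time $T$ with $N(t) \leq \frac{b}{\omega} + \epsilon$ for all $t \geq T$, and conclude that the trajectory enters every $\epsilon$-neighborhood of the compact, simplex-like region $\Omega$, so that $\mathrm{dist}(x(t),\Omega) \to 0$ and every solution is eventually absorbed into $\Omega$.

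Since the integration and the limit are already carried out in the excerpt, the only genuine content is the invocation of the comparison principle together with the sign analysis of the constant $C$: the single expression for the upper bound on $N$ simultaneously delivers invariance (when $C \leq 0$) and attractivity (when $C > 0$, via decay of the exponential), and I would treat these two cases cleanly. I expect the main obstacle to be purely expository rather than analytic — namely pinning down a precise meaning of \emph{attracting set} as convergence of $\mathrm{dist}(x(t),\Omega)\to 0$, and verifying that the bound on $N$ together with positivity of each coordinate truly confines the trajectory to the bounded region $\Omega$ and not merely to the half-space $\{N \leq b/\omega\}$.
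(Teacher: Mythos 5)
Your proposal is correct and follows essentially the same route as the paper: positivity via the inward-pointing vector field on the bounding planes, then the scalar inequality $\frac{dN}{dt} \leq b - \omega N$ integrated to give $N(t) \leq \frac{b}{\omega} + Ce^{-\omega t}$, with the sign of $C$ delivering invariance and the decay of the exponential delivering attractivity. If anything, your version is slightly more careful than the paper's, which integrates the differential inequality as though it were an equality rather than explicitly invoking the comparison principle as you do.
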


\subsection{Disease-Free Equilibrium}
The TB-free equilibrium represents a critical state in epidemiological models where the population is entirely devoid of tuberculosis infection. This equilibrium is attained by setting the right-hand side of the model (\ref{begineq})-(\ref{endeq}) to zero and nullifying the variables, $T_l = 0$, $T_d = 0$ and $T_t = 0$. For the model (\ref{begineq})$-$(\ref{endeq}), the disease free equillibrium is given by 
\[
\textbf{E}_0 = \Bigg(\frac{b}{\lambda_1 + \mu_1},0,0,0,0\Bigg)
\]

\subsection{Infected Equilibrium}
The infected equilibrium in a tuberculosis mathematical model is a crucial state where the transmission of the disease reaches a steady balance between new infections and recoveries. At this equilibrium, the number of individuals actively infected with tuberculosis remains constant over time. Mathematically, it is identified by setting the rate of change of all the compartments to zero. Setting the right hand side of (\ref{begineq})-(\ref{endeq}) to zero we get,
\[
\begin{aligned}
&\textbf{$U$}^* = \frac{A}{\lambda_2}, \\
&\textbf{$T_l$}^* = \frac{b\lambda_2 - (\lambda_1 + \mu_1)A}{A \lambda_2}, \\
&\textbf{$T_d$}^* = \frac{B\, \textbf{$T_t$}^*}{\lambda_4}, \\
&\textbf{$T_t$}^* = \frac{x}{y}, \\
&\textbf{$R$}^* = \frac{\lambda_5 \, \textbf{$T_t$}^*}{\lambda_7 + \mu_5},\\
where,  
    &\indent A = \lambda_3 + \mu_2, \quad B = \lambda_5 + \lambda_6 + \mu_4, \quad C = \lambda_4 + \mu_3 + \sigma, \quad D = b\lambda_2 - (\lambda_1 + \mu_1)A, \\
    &\indent x = -\left(\frac{\lambda_1 A}{\lambda_2} + \frac{\lambda_3 D}{\lambda_2 A}\right) \quad \text{and} \quad y = \left(\frac{\lambda_7 \lambda_5}{\lambda_7 + \mu_5} - \frac{B C}{\lambda_4} + \lambda_6\right).
\end{aligned}
\]

\subsection{Calculation of \texorpdfstring{$ R_0 $}{ }}

A basic reproduction number $R_0$ represents the average number of secondary infections produced by a single infected individual in a completely susceptible population. If $R_0$ is greater than 1, it suggests that the disease is likely to spread within the population, leading to an epidemic or outbreak. Conversely, if $R_0$ is less than 1, the disease is likely to die out over time, as each infected individual, on average, infects fewer than one other person.
  
  We calculate the reproduction number using the next-generation matrix method, as outlined in \cite{diekmann2010construction}, and obtain Jacobian matrices for new infection terms \( F \) and remaining transfer terms as \( V \).
 \begin{align*}
    F &= \begin{bmatrix}
        \lambda_2 T_l U \\
        \lambda_1 U \\
        0 
    \end{bmatrix}  &
    V &= \begin{bmatrix}
        -(\lambda_3 + \mu_{2})T_l \\
        \lambda_3T_l - (\lambda_4 + \mu_3 + \sigma)T_d + \lambda_6T_t + \lambda_7R \\
        \lambda_4T_d - (\lambda_5 + \lambda_6 + \mu_4)T_t  
    \end{bmatrix}
 \end{align*}
 The jacobian matrix of F and V matrices with respect to state variables $T_l$, $T_d$ and $T_t$ needs to be found out.
 Let the Jacobian matrix of F and V  be $F_j$  and  $V_j$ respectively. 

 \begin{align}
 F_j &=  \begin{bmatrix}
 \lambda_2 U & 0 & 0 \\
 0 & 0 & 0 \\
 0 & 0 & 0
 \end{bmatrix}  \quad\quad\quad\quad\quad &
 V_j& =  \begin{bmatrix}
 -(\lambda_3 + \mu_2) & 0 & 0 \\
 \lambda_3 & -(\lambda_4 + \mu_3 + \sigma) & \lambda_6 \\
 0 & \lambda_4 & -(\lambda_5 + \lambda_6 + \mu_4)
 \end{bmatrix}
 \end{align}

 The largest eigenvalue of the matrix,  $F_j V_j^{-1}$ at $\textbf{E}_0 $   is basic reproduction number and is given by 
\[
\textbf{R}_0 = \frac{\lambda_2 b}{(\lambda_1 + \mu_1)(\lambda_3 +\mu_2)}
\]
 \subsection{Existence and Uniqueness of  Solution}
 It is very important to establish the existence of solution to the system (\ref{begineq})-(\ref{endeq}) before any further analysis of the model.
 In this section, following the approach as in  \cite{sowole2019existence}, we investigate the existence and uniqueness of solutions for our system (\ref{begineq})-(\ref{endeq}). We delve into the analysis of a general first-order ordinary differential equation  of the form
 \begin{equation}
 \dot{x} = f(t, x), \quad x(t_0) = x_0
 \end{equation}
 with $f : \mathbb{R} \times \mathbb{R}^n \rightarrow \mathbb{R}^n$
 sufficiently many times differentiable.\\
 
 The key questions revolve around the conditions under which a solution exists and when it is unique. We employ a theorem discussed in \cite{sowole2019existence} to establish the existence and uniqueness of solutions for our model.

\begin{theorem} \label{th2}
    Let $D$ be the domain defined as follows:
    \[
    |t - t_0| \leq a, \quad \|x - x_0\| \leq b, \quad x = (x_1, x_2, \dots, x_n), \quad x_0 = (x_{10}, x_{20}, \dots, x_{n0})
    \]
    Assume $f(t, x)$ satisfies the Lipschitz condition:
    \[
    \| f(t, x_2) - f(t, x_1) \| \leq k \| x_2 - x_1 \|
    \]
    for any pairs $(t, x_1)$ and $(t, x_2)$ in the domain $D$, where $k$ is a positive constant. Then, there exists a constant $\delta > 0$ such that a unique continuous vector solution $x(t)$ exists.
\end{theorem}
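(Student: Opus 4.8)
The plan is to prove this as the classical Picard--Lindel\"of theorem, recasting the initial value problem as an equivalent fixed-point problem and then running the method of successive approximations. First I would observe that a continuous function $x(t)$ solves $\dot{x} = f(t,x)$ with $x(t_0) = x_0$ on an interval $|t - t_0| \leq \delta$ if and only if it satisfies the integral equation
\[
x(t) = x_0 + \int_{t_0}^t f(s, x(s)) \, ds .
\]
This equivalence follows from the fundamental theorem of calculus together with the continuity of $f$, and it is the natural object on which to iterate because it absorbs the initial condition automatically and turns differentiation into integration, which behaves far better under limits.

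Next I would set up the Picard iterates $x^{(0)}(t) \equiv x_0$ and $x^{(n+1)}(t) = x_0 + \int_{t_0}^t f(s, x^{(n)}(s))\, ds$. Since $f$ is continuous on the compact set $D$, it is bounded there by some $M = \max_D \|f\|$, and I would choose $\delta = \min\{a,\, b/M\}$. With this choice an induction argument shows each iterate stays within the ball $\|x^{(n)}(t) - x_0\| \leq b$, so that every evaluation of $f$ occurs inside $D$ and the Lipschitz hypothesis is legitimately applicable at each step.

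The heart of the argument is the convergence of the iterates. Using the Lipschitz constant $k$, I would establish by induction the telescoping estimate
\[
\|x^{(n+1)}(t) - x^{(n)}(t)\| \leq \frac{M\, k^n\, |t - t_0|^{n+1}}{(n+1)!},
\]
whose right-hand side is dominated by the terms of the convergent exponential series summing to $(M/k)(e^{k\delta}-1)$. Hence $\sum_n \bigl(x^{(n+1)} - x^{(n)}\bigr)$ converges uniformly on $|t - t_0| \leq \delta$, so the iterates converge uniformly to a continuous limit $x(t)$. Uniform convergence, combined with the continuity of $f$, permits passing the limit inside the integral, which shows that $x(t)$ satisfies the integral equation and therefore the original IVP.

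Finally, for uniqueness I would suppose two solutions $x(t)$ and $y(t)$ share the initial data, subtract their integral representations, and apply the Lipschitz bound to obtain $\|x(t) - y(t)\| \leq k \int_{t_0}^t \|x(s) - y(s)\|\, ds$; Gronwall's inequality then forces $x \equiv y$. I expect the main obstacle to be the bookkeeping that keeps the iterates inside $D$ while simultaneously securing uniform convergence, since these two requirements constrain the choice of $\delta$ from opposite directions. The factorial decay in the telescoping estimate is precisely what reconciles them, yielding convergence on the full interval $|t-t_0|\le\delta$ without the stronger restriction $k\delta < 1$ that a bare contraction-mapping argument would require.
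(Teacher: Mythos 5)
Your proof is mathematically correct: it is the standard Picard--Lindel\"of argument via the equivalent integral equation and successive approximations, with the right choice $\delta = \min\{a,\, b/M\}$ to keep the iterates inside $D$, the factorial-decay telescoping estimate to get uniform convergence, and Gronwall for uniqueness. The one thing to be aware of is that the paper does not actually prove this theorem at all --- it imports it verbatim from the cited reference (Sowole et al.) and uses it as a black box; the only proof the paper supplies is for its Theorem 3, where it verifies that the partial derivatives $\partial f_i/\partial x_j$ of the TB system are continuous and bounded so that the Lipschitz hypothesis of this theorem holds. So you have gone further than the paper by supplying the underlying existence--uniqueness argument itself, and what you wrote is precisely the classical proof the cited source rests on. A minor point of rigor: the statement as given only asserts the Lipschitz condition, so your use of $M = \max_D \|f\|$ tacitly relies on the continuity of $f$ declared earlier in the section ($f$ ``sufficiently many times differentiable''); it would be worth flagging that continuity of $f$ on the compact set $D$ is being used both to define $M$ and to pass the limit under the integral sign.
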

It's important to note that condition (7) is met by ensuring that the partial derivatives $\frac{\partial f_i}{\partial x_j}$, for $i,j=1,2,3,\dots,n$, are continuous and bounded within the domain $D$.

\begin{theorem} \label{th3}
(Existence of Solution) Let $D$ be the domain defined as above, satisfying the conditions. Then, there exists a unique solution for the system \eqref{begineq}-\eqref{endeq} within the domain $D$, which remains bounded..
\end{theorem}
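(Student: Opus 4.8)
The plan is to verify the hypotheses of Theorem \ref{th2} for the right-hand side of the system (\ref{begineq})--(\ref{endeq}) and then combine the resulting local existence-uniqueness with the invariance already established in Theorem \ref{th1}. First I would set $x = (x_1,x_2,x_3,x_4,x_5) = (U, T_l, T_d, T_t, R)$ and write the vector field $f = (f_1,\dots,f_5)$ componentwise, where $f_1 = b - (\lambda_1+\mu_1)U - \lambda_2 T_l U$, $f_2 = \lambda_2 T_l U - (\lambda_3+\mu_2)T_l$, and so on through $f_5 = \lambda_5 T_t - (\lambda_7+\mu_5)R$. Each $f_i$ is a polynomial in the state variables, hence infinitely differentiable; the only nonlinearity is the bilinear term $\lambda_2 T_l U$ appearing in $f_1$ and $f_2$.

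Next, following the remark after Theorem \ref{th2}, I would reduce the Lipschitz condition to checking that all partial derivatives $\partial f_i/\partial x_j$ are continuous and bounded on $D$. Continuity is immediate, since each derivative is again a polynomial. For boundedness I would compute the Jacobian entries explicitly; every entry is either a constant built from the model parameters (for example $\partial f_3/\partial T_d = -(\lambda_4+\mu_3+\sigma)$) or one of the terms $-\lambda_2 T_l$, $-\lambda_2 U$, $\lambda_2 T_l$, $\lambda_2 U - (\lambda_3+\mu_2)$ arising from the bilinear interaction.

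The crux is bounding these variable entries. Here I would invoke Theorem \ref{th1}: because $\Omega$ is positively invariant and attracting, every solution with nonnegative initial data satisfies $0 \le U, T_l \le N \le b/\omega$. Restricting the domain $D$ to $\Omega$ therefore gives a uniform estimate $|\partial f_i/\partial x_j| \le \lambda_2 (b/\omega) + M$, where $M$ collects the finitely many parameter combinations, so a finite Lipschitz constant $k$ exists. With continuity and boundedness of the partial derivatives confirmed, Theorem \ref{th2} yields a unique continuous vector solution $x(t)$ on some interval $|t - t_0| \le \delta$.

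Finally, I would upgrade this local solution to a global bounded one. Since $\Omega$ is forward invariant, the solution cannot escape the compact set $\Omega$, which rules out finite-time blow-up; the standard continuation argument then extends the unique solution to all $t \ge 0$, and the estimate $N(t) \le b/\omega$ from the boundedness analysis certifies that it stays bounded. I expect the only genuine subtlety to be this explicit appeal to invariance in order to control the bilinear term: on all of $\mathbb{R}^5$ the Jacobian is unbounded, so it is the boundedness of the partial derivatives, rather than their continuity, that is the step requiring care.
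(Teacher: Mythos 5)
Your proposal is correct and follows essentially the same route as the paper: compute the Jacobian entries $\partial f_i/\partial x_j$, observe they are continuous and bounded, conclude the Lipschitz condition holds, and invoke Theorem \ref{th2}. The only difference is one of care rather than strategy — the paper simply writes $|-\lambda_2 U| < \infty$ and the like, implicitly relying on the boundedness of the domain $D$, whereas you explicitly justify the bound on the bilinear entries via the invariant set $\Omega$ of Theorem \ref{th1} and add the continuation argument for global existence, both of which the paper leaves unstated.
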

\begin{proof} Let
\numberwithin{equation}{section}
\begin{align}
 & f_1 = b - (\lambda_1 + \mu_1)U - \lambda_2 T_l U && \label{sec11equ1}\\
  &f_2 = \lambda_2T_l U - (\lambda_3 + \mu_{2})T_l && \ \label{sec11equ2} \\
&f_3 = \lambda_1U + \lambda_3T_l - (\lambda_4 + \mu_3 + \sigma)T_d + \lambda_6T_t + \lambda_7R &&  \label{sec11equ32}\\
 & f_4 = \lambda_4T_d - (\lambda_5 + \lambda_6 + \mu_4)T_t &&  \label{sec11equ4} \\
& f_5 = \lambda_5 T_t - (\lambda_7 + \mu_5)R &&  \label{sec11equ5}
\end{align}
We will show that,\newline
 \indent \hspace{14em} $\displaystyle \frac{\partial f_i}{\partial x_j}$, i, j=1,2,3,....,n,\newline is continuous and bounded in the domain D.\newline 
 Considering  (8), we have, \hspace{1em} $\displaystyle \frac{\partial f_1}{\partial U}$  $=$ $| -(\lambda_1+\mu_1)-\lambda_2T_l | < \infty$,\quad
$\displaystyle \frac{\partial f_1}{\partial T_l}$  $=$ $| -\lambda_2 U | < \infty$,\newline\\
\indent \hspace{11em} $\displaystyle \frac{\partial f_1}{\partial T_d}$  $=$ $0 < \infty$,\quad
$\displaystyle \frac{\partial f_1}{\partial T_t}$  $=$ 0 $<$ $ \infty$,\quad
$\displaystyle \frac{\partial f_1}{\partial R}$  $=$ 0 $<$ $\infty$,\newline

Considering  (9), \hspace{3.5em} $\displaystyle \frac{\partial f_2}{\partial U}$  $=$ $\lambda_2T_l < \infty$,\quad
$\displaystyle \frac{\partial f_2}{\partial T_l}$  $=$ $| \lambda_2 U-(\lambda_3+\mu_2) | < \infty$,\newline\\
\indent \hspace{11em} $\displaystyle \frac{\partial f_2}{\partial T_d}$  $=$ $0 < \infty$,\quad
$\displaystyle \frac{\partial f_2}{\partial T_t}$  $=$ 0 $<$ $ \infty$,\quad
$\displaystyle \frac{\partial f_2}{\partial R}$  $=$ 0 $<$ $\infty$,\newline 

Considering  (10), \hspace{3em} $\displaystyle \frac{\partial f_3}{\partial U}$  $=$ $\lambda_1  < \infty$,\quad
$\displaystyle \frac{\partial f_3}{\partial T_l}$  $=$ $\lambda_3  < \infty$,\newline\\
\indent \hspace{11em} $\displaystyle \frac{\partial f_3}{\partial T_d}$  $=$ $|-(\lambda_4+\mu_3+\sigma)| < \infty$,\quad
$\displaystyle \frac{\partial f_3}{\partial T_t}$  $=$ $\lambda_6$ $<$ $ \infty$,\quad
$\displaystyle \frac{\partial f_3}{\partial R}$  $=$ $\lambda_7$ $<$ $\infty$,\newline 

Considering  (11), \hspace{3em} $\displaystyle \frac{\partial f_4}{\partial U}$  $=$ $0  < \infty$,\quad
$\displaystyle \frac{\partial f_4}{\partial T_l}$  $=$ $0 < \infty$,\newline\\
\indent \hspace{11em} $\displaystyle \frac{\partial f_4}{\partial T_d}$  $=$ $\lambda_4 < \infty$,\quad
$\displaystyle \frac{\partial f_4}{\partial T_t}$  $=$ $|-(\lambda_5+\lambda_6+\mu_4)|$ $<$ $ \infty$,\quad
$\displaystyle \frac{\partial f_4}{\partial R}$  $=$ $0$ $<$ $\infty$,\newline 

Considering  (12), \hspace{3em} $\displaystyle \frac{\partial f_5}{\partial U}$  $=$ $0  < \infty$,\quad
$\displaystyle \frac{\partial f_5}{\partial T_l}$  $=$ $0 < \infty$,\newline\\
\indent \hspace{11em} $\displaystyle \frac{\partial f_5}{\partial T_d}$  $=$ $0 < \infty$,\quad
$\displaystyle \frac{\partial f_5}{\partial T_t}$  $=$ $\lambda_5$ $<$ $ \infty$,\quad
$\displaystyle \frac{\partial f_5}{\partial R}$  $=$ $|-(\lambda_7+\mu_5)|$ $<$ $\infty$,\newline \\\\
Hence we have shown that all the partial derivatives are continuous and bounded.Thus, with the confirmation of the continuity and boundedness of all partial derivatives, the Lipschitz condition is fulfilled. Consequently, a unique solution to the system (\ref{begineq})-(\ref{endeq}) exists within the region D according to theorem \ref{th2}.
\end{proof}

\subsection{Sensitivity Analysis of \texorpdfstring{$ R_0 $}{ }}

Sensitivity analysis is an  important method used to determine the relationship between the model parameters and a disease dynamics. A Partial Rank Correlation Coefficient (PRCC) is a robust sensitivity analysis method that combines ranked correlation and partial correlation to measure the correlation between the input parameter and the output variable. This approach removes any hidden connections between the main parameter and other factors in the model. It also gets rid of any links between the other factors and the final result, letting us concentrate only on how the main parameter affects the outcome. We present the PRCC analysis to study the influence of parameters to basic reproduction number $R_0$. \\

  Similar to method discussed in \cite{marino2008methodology}, employing the concept of  Partial Rank Correlation, about 1000 latin hypercube samples were generated using Python  where all the  parameters were assumed to follow a uniform distribution. The expression for $R_0,$  given by   
\[
\textbf{R}_0 = \frac{\lambda_2 b}{(\lambda_1 + \mu_1)(\lambda_3 +\mu_2)}
\]
is used as output variable to each of the latin hypercube samples. 
\begin{figure}[ht] 
    \centering
    \includegraphics[width=12cm, height=6cm]{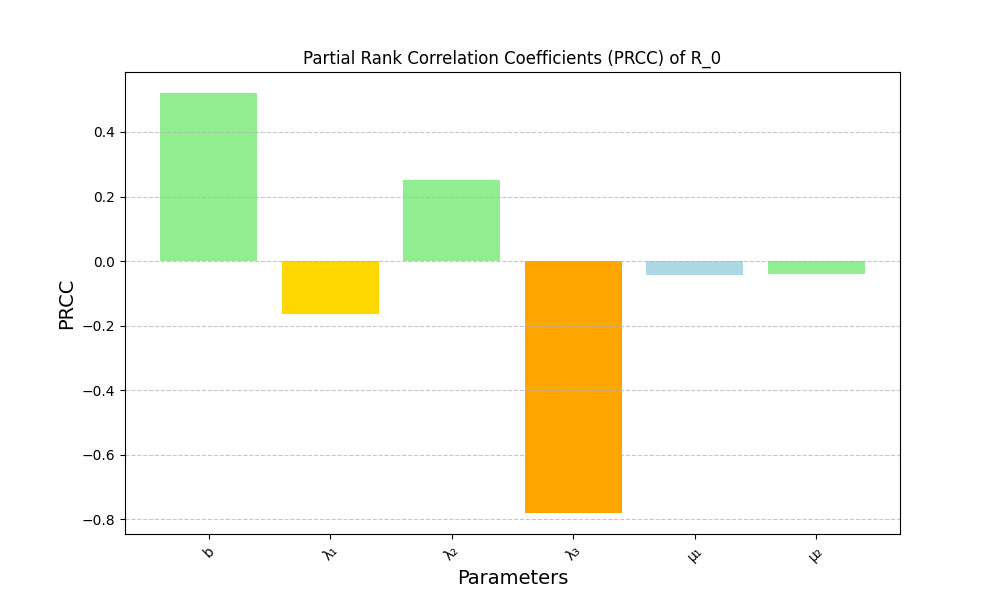} 
    \captionof{figure}{PRCC analysis of $R_0$ } 
    \label{PRCC_R0)}
\end{figure}
\\

The output values of $R_0$ corresponding to  each of the set of parameters from latin hypercube sample were further used to perform PRCC analysis of $R_0$.
 PRCC assigns a parameter a value between -1 to +1 where its magnitude represents the parameter importance  while the signs represents the direction of relationship between the input and output variables. Negative PRCC implies that when a parameter increases the output variable decreases  and when parameter value decreases, the output variable value increases. Positive PRCC means that when parameter value increases output variable  value also increases and output value decreases when parameter decreases. \\
 
 As presented in figure \ref{PRCC_R0)}, parameters that is positively correlated to $R_0$ are $\lambda_2$ and b which are transmission rate to tuberculosis latent compartment from susceptible compartment and natural birth rate. Thus, the most influential parameter is  $\lambda_2$ marked by highest positive PRCC value such that  $R_0$ increases when $\lambda_2$ increases  .\\

 Parameters that are negatively correlated are transmission rate from uninfected  to diseased compartment $\lambda_1$, transmission rate from latent to diseased compartment $\lambda_3$, death rate of uninfected compartment $\mu_1$ and death rate of latently infected compartment  $\mu_2$ respectively. The most influential parameter is  $\lambda_3$ with highest PRCC negative value such that $R_0$ decreases when it increases.\\
 
Therefore, it is important that our intervention should target in minimising the parmaeter $\lambda_2$ and maximising the parameters $\lambda_1$ and $\lambda_3$ respectively.

\section{Optimal Control Studies}
  Optimal control strategies consists of  a range of interventions aimed at reducing TB transmission, improving treatment outcomes, and addressing risk factors associated with TB progression. Hunger-related malnutrition significantly increases the susceptibility to TB, raising the risk by 6 to 10 times. Additionally, it serves as a contributing factor to the progression from latent TB infection to active TB \cite{Sinha}. This transition is often marked by the reactivation of latent TB, frequently observed in individuals with a low Body Mass Index (BMI). Studies have shown that the risk of TB escalates by 13.8 \% \cite{Putri} with each unit decrease in BMI. The relationship between TB and malnutrition is two-way. Malnutrition can be caused by TB, due to inflammation-related issues like cachexia, anorexia, and malabsorption \cite{Scrimshaw}. Similarly, Diabetes Mellitus (DM) significantly impacts tuberculosis in several ways, complicating both the management and outcomes of the disease. Firstly, diabetes weakens the immune system, making individuals more susceptible to TB infection. Moreover, diabetes alters the immune response to TB, affecting  body's ability to control the TB bacteria, leading to more severe and prolonged TB infections. Diabetes makes a person’s risk of getting TB 3 times bigger \cite{diabetes}.\\
 
  In the study, we aim to  improve the treatment through intervention like TB Preventive Treatment (TPT) and supplementary interventions for conditions like Diabetes (D) and Malnutrition (MN). Tuberculosis Preventive Treatment involves providing medication to individuals at high risk of developing active TB disease, such as close contacts of TB patients or individuals with latent TB infection (TBI). By treating latent infections, TPT reduces the risk of progression to active TB, thereby decreasing transmission and preventing new cases. Diabetes mellitus (DM) is a known risk factor for TB, as it impairs the immune response and increases susceptibility to infections. Optimal control of TB involves screening for and managing DM among TB patients, ensuring timely diagnosis, treatment, and glycemic control. Integration of TB and DM services facilitates comprehensive care and improves treatment outcomes. Malnutrition weakens the immune system and predisposes individuals to TB infection and disease progression. Optimal control strategies include nutritional supplementation programs aimed at improving the nutritional status of TB patients and at-risk populations. Adequate nutrition supports immune function, enhances treatment response, and reduces TB-related morbidity and mortality. \\
 
 We examine a control problem involving interventions such as TB preventive treatment (TPT) and additional measures for conditions like diabetes (D) and malnutrition (MN). The dynamic model, including control variables, is represented by the following system of nonlinear differential equations.

\numberwithin{equation}{section}
\begin{align}
   \frac{dU}{dt} &= b - (\lambda_1 + \mu_1)U - \lambda_2 T_l U \label{begcost} \\
   \frac{dT_l}{dt} &= \lambda_2 T_l U - (\lambda_3 + \mu_{1T}(t) + \mu_{1M}(t) + \mu_{1D}(t) + \mu_{2})T_l \label{eq:sec11equ21} \\
   \frac{dT_d}{dt} &= \lambda_1 U + \lambda_3 T_l - (\lambda_4 + \mu_{2T}(t) + \mu_{2M}(t) + \mu_{2D}(t) + \mu_3 + \sigma) T_d + \lambda_6 T_t + \lambda_7 R \label{eq:sec11equ33} \\
   \frac{dT_t}{dt} &= \lambda_4 T_d - (\lambda_5 + \lambda_6 + \mu_{3T}(t) + \mu_{3M}(t) + \mu_{3D}(t) + \mu_4) T_t \label{eq:sec11equ41} \\
   \frac{dR}{dt} &= \lambda_5 T_t - (\lambda_7 + \mu_5) R \label{endcost}
\end{align}

 For simplicity, we define $U_{T}$,$U_{M}$ and\hspace{0.1em} $U_{D}$ as follows,\newline
\indent \hspace{10em} $U_{T}={(\mu_{1T}, \mu_{2T}, \mu_{3T})}$,\quad $U_{M}={(\mu_{1M},\mu_{2M}, \mu_{3M})}$\newline
\indent \hspace{10em} $U_{D}={(\mu_{1D},\mu_{2D}, \mu_{3D})}$

 With this notation, the set of all admissible controls is given by,
\[
\begin{aligned}
&U_c = \Bigg\{ (U_{T}(t),U_{M}(t), U_{D}(t)) : 
U_{T}(t) \in [0,U_{T}\text{max}],\\
&\hspace{13em} U_{M}(t) \in [0,U_{M}\text{max}], \\
&\hspace{13em} U_{D}(t) \in [0,U_{D}\text{max}] \Bigg\}
\end{aligned}
\]
 Here, all the control variables are measurable and bounded functions, and T is the final time of the applied control interventions. The upper bounds of control variables are based on the resource limitation and the limit to which these intervention are assessed. Our main objective of this study is to investigate such optimal control functions that maximizes the benefits of each of the drug interventions and minimize infection.\newline 
 Based on the above assumptions, we wish to minimize the objective cost functional given by
\large

\begin{equation} \label{cost functional}
\begin{split}
 \textit{J($U_{T}$, $U_{M}$, $U_{D}$)} = \int_{0}^{T} & \Big[ A1\Big({\mu_{1T}^2} +{\mu_{2T}^2} +{\mu_{3T}^2}\Big) 
 + A2\Big({\mu_{1M}^2} +{\mu_{2M}^2} +{\mu_{3M}^2}\Big)\\
 &+ A3\Big({\mu_{1D}^2} +{\mu_{2D}^2} +{\mu_{3D}^2}\Big) 
 +T_l(t)+T_d(t)+T_t(t)\Big]dt
\end{split}
\end{equation}

subject to the system,
\normalsize
\numberwithin{equation}{section}
\begin{align}
   \frac{dU}{dt} &= b - (\lambda_1 + \mu_1)U - \lambda_2 T_l U \label{} \\
   \frac{dT_l}{dt} &= \lambda_2 T_l U - (\lambda_3 + \mu_{1T}(t) + \mu_{1M}(t) + \mu_{1D}(t) + \mu_{2})T_l \label{eq:sec11equ22} \\
   \frac{dT_d}{dt} &= \lambda_1 U + \lambda_3 T_l - (\lambda_4 + \mu_{2T}(t) + \mu_{2M}(t) + \mu_{2D}(t) + \mu_3 + \sigma) T_d + \lambda_6 T_t + \lambda_7 R \label{eq:sec11equ34} \\
   \frac{dT_t}{dt} &= \lambda_4 T_d - (\lambda_5 + \lambda_6 + \mu_{3T}(t) + \mu_{3M}(t) + \mu_{3D}(t) + \mu_4) T_t \label{eq:sec11equ43} \\
   \frac{dR}{dt} &= \lambda_5 T_t - (\lambda_7 + \mu_5) R \label{}
\end{align}
 with initial conditions \( U(0) \geq 0 \), \( TBI(0) \geq 0 \), \( TBD(0) \geq 0 \), \( TT(0) \geq 0 \), and \( R(0) \geq 0 \).
 The inclusion of quadratic terms in the definition of the objective function reflects the multiple effects that interventions can have when administered
 \cite{joshi2002optimal}. This approach acknowledges that interventions may not only produce linear effects but also interact in complex ways, leading to quadratic relationships. Furthermore, defining the objective function as a linear combination of quadratic terms of control variables simplifies the problem's complexity. Studies \cite{kamyad2014mathematical, madubueze2020controlling} have demonstrated its efficacy in addressing optimization challenges while still accounting for the interactions between interventions. However, it's worth noting that incorporating higher orders of control variables in objective functions can introduce complications \cite{khatua2020dynamic, lee2010optimal}.\\
 
The integrand of the cost functional (\ref{cost functional}) is given by:
\begin{equation} \label{eq:integrand_cost_functional}
  \begin{aligned}
  \textit{L}(U_{T}, U_{M}, U_{D}, \text{TBI}, \text{TBD}, 
  \text{TT}) &= A1\Big(\mu_{1T}^2 + \mu_{2T}^2 + 
  \mu_{3T}^2\Big) \\
  &\quad + A2\Big(\mu_{1M}^2 + \mu_{2M}^2 + \mu_{3M}^2\Big) \\
  &\quad + A3\Big(\mu_{1D}^2 + \mu_{2D}^2 + \mu_{3D}^2\Big) \\
  &\quad + \text{$T_l$}(t) + \text{$T_d$}(t) + \text{$T_t$}(t)
  \end{aligned}
 \end{equation}
is called as the Lagrangian of the running cost.  \\

 The cost functional (\ref{cost functional}), quantifies the advantages gained from implementing interventions, while also tracking the dynamics of latently infected, diseased, and treated individuals over the observation period. Our objective is dual: to minimize the counts of latent infections, active diseases, and individuals under treatment throughout this period, while also minimizing the extent of intervention required. The coefficients 
 $A_i$, $i = 1,2,3,$ represent positive weight constants linked to the benefits of each intervention. Our aim is  to identify the most effective intervention strategy (represented by($U_{T}$, $U_{M}$, $U_{D}$)) that minimizes tuberculosis burden while optimizing resource allocation and maximizing intervention benefits. \\

 The admissible solution set for the optimal control problem (\ref{begcost})-(\ref{endcost}) is given by
 \[
 \Omega = \left\{ (U_{T},U_{M},U_{D},U, T_l, T_d, T_t, R ) \,\middle|\, U, T_l, T_d, T_t, \text{ and } R \text{ satisfy } (\ref{begcost})-(\ref{endcost}), \forall U_i \in U_c \right\}
 \]
 All the control variables considered here are measurable and bounded functions.
 The upper limits of the control variables depends on the resource constraint.
\subsection{Existence of Optimal Controls}
 It is essential to address a foundational question: does an optimal  solution even exist?  An existence theorem certifies that the problem has a solution. To establish the existence of optimal control functions that minimize the objective function over a finite time interval [0,T], we aim to establish the conditions outlined in theorem  \ref{th4} by Fleming and Rishel \cite{fleming2012deterministic}. The following theorem provides a framework for verifying the existence of optimal control solutions.
 \begin{theorem} \label{th4}
   There exists a 9-tuple of optimal controls,
   \begin{align*}
    \Omega = \{ &\mu_{1T}^*, \mu_{2T}^*, \mu_{3T}^*, \mu_{1M}^*, \mu_{2M}^*, \mu_{3M}^*, 
               \mu_{1D}^*, \mu_{2D}^*, \mu_{3D}^* \}
   \end{align*}
   in the set of admissible controls $U_c$ such that the cost functional (5.6) is minimized corresponding to the optimal control problem (\ref{begcost})-(\ref{endcost}).
\end{theorem}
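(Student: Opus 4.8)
The plan is to verify, one at a time, the five hypotheses of the Fleming--Rishel existence theorem \cite{fleming2012deterministic}, since once these hold the existence of a minimizing $9$-tuple follows immediately. I would organize the verification around three objects: the controlled state system (\ref{begcost})--(\ref{endcost}), the admissible set $U_c$, and the integrand $L$ of (\ref{eq:integrand_cost_functional}).

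First I would establish that the set of admissible controls together with the corresponding state trajectories is nonempty. This is where the earlier existence result does the work: for any fixed measurable, bounded control in $U_c$, the augmented right-hand sides are still continuous with bounded partial derivatives in the state variables, since the added control terms enter linearly and are bounded, so the Lipschitz argument of Theorem \ref{th3} extends verbatim and guarantees a unique bounded state trajectory. Next I would observe that $U_c$ is a Cartesian product of the compact intervals $[0,U_{T}\text{max}]$, $[0,U_{M}\text{max}]$, $[0,U_{D}\text{max}]$, and is therefore closed and convex by construction.

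The third step is to bound the right-hand side of the controlled system by an affine function of the state and control variables. Here I would invoke the positive invariance and boundedness established in Theorem \ref{th1}: every trajectory remains in $\Omega$, so the only genuinely nonlinear term, the incidence $\lambda_2 T_l U$, is dominated by $\lambda_2 (b/\omega)\,T_l$ on $\Omega$, while all remaining terms are already linear in the states and controls; the vector field thus admits a bound of the required form. The fourth step is the convexity of $L$ on $U_c$: since $L$ is the sum of the nonnegative quadratic forms $A_1(\mu_{1T}^2+\mu_{2T}^2+\mu_{3T}^2)$, $A_2(\mu_{1M}^2+\mu_{2M}^2+\mu_{3M}^2)$, $A_3(\mu_{1D}^2+\mu_{2D}^2+\mu_{3D}^2)$ plus the control-independent terms $T_l+T_d+T_t$, it is convex in $(U_{T},U_{M},U_{D})$ as a positively weighted sum of squares.

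Finally I would verify the coercivity lower bound. Because $T_l,T_d,T_t\geq 0$ on $\Omega$ and the weights $A_i$ are positive, we obtain $L \geq c_1\bigl(|U_{T}|^2+|U_{M}|^2+|U_{D}|^2\bigr)-c_2$ with $c_1=\min\{A_1,A_2,A_3\}$, $c_2=0$, and $\beta=2>1$. With all five hypotheses in place, the Fleming--Rishel theorem yields the optimal $9$-tuple. The main obstacle I anticipate is the third condition: the bilinear incidence term rules out a naive global linear bound, so the argument must lean explicitly on the a priori confinement of trajectories to the invariant region $\Omega$ rather than on the algebraic structure of the equations alone.
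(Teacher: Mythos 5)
Your proposal is correct and follows essentially the same route as the paper: both invoke the Fleming--Rishel existence theorem and verify its hypotheses by combining the positivity/boundedness and Lipschitz results of Theorems \ref{th1} and \ref{th3} (nonemptiness), the closedness and convexity of $U_c$, the linearity of the controlled system in the controls, the convexity of the quadratic Lagrangian, and the coercivity bound with constant $\min\{A_1,A_2,A_3\}$. The only difference is one of care rather than substance: you explicitly dominate the bilinear incidence term $\lambda_2 T_l U$ by $\lambda_2 (b/\omega) T_l$ on the invariant region to get the required affine growth bound on the vector field, a point the paper passes over by simply asserting linearity in the controls with state-dependent coefficients.
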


\begin{proof}
 In order to show the existence of optimal control functions, we will show that the following conditions are satisfied:\\
 
 1. The solution set for the system (5.1)-(5.5) along with bounded controls must be non-empty, i.e.,\(\Omega \neq \emptyset\). \\
 
 2. Control set \( U_c \) is closed and convex, and the system should be expressed linearly in terms of the control variables with coefficients that are functions of time and state variables.\\
 
 3. The Lagrangian, i.e., \( L \), is convex on \( U_c \)  and \( L(U_{T}, U_{M}, U_{D}, TBI, TBD, TT) \geq g(U_{T}, U_{M}, U_{D}) \) is a continuous function of control variables such that \( \left| (U_{T}, U_{M}, U_{D}) \right|^{-1} g(U_{T}, U_{M}, U_{D}) \rightarrow \infty \) whenever \( \left| (U_{T}, U_{M}, U_{D}) \right| \rightarrow \infty \), where \( \left| \hspace{1em}\right| \) is the \( l^2 (0, T) \) norm. \\

we will show that each of the conditions are satisfied:\\

 1. From Positivity and boundedness of solutions of the system (\ref{begineq})-(\ref{endeq}), all solutions are bounded for each bounded control variable in \( U\). Also, the right-hand side of the system (\ref{begineq})-(\ref{endeq}) satisfies Lipschitz condition with respect to state variables. Hence, using the positivity and boundedness condition and the existence of solution from Picard-Lindelof Theorem \cite{makarov2013picard}, we have satisfied condition 1.\\
 
 2. \( U_c \) is closed and convex by definition. Also, the system  (\ref{begcost})-(\ref{endcost}) is clearly linear with respect to controls such that coefficients are only state variables or functions dependent on time. Hence condition 2 is satisfied.\\
 
 3. Choosing \( g(U_{T},U_{M},U_{D}) = \kappa \left( \mu_{1T}^2 + \mu_{2T}^2 + \mu_{3T}^2 + \mu_{1M}^2 + \mu_{2M}^2 + \mu_{3M}^2 + \mu_{1D}^2 + \mu_{2D}^2 + \mu_{3D}^2 \right) \) where \( \kappa = \min\{A1,A2,A3\} \), condition 3 is satisfied. \\

Hence there exists a control 9-tuple, \( (\mu_{1T}^*,\mu_{2T}^*,\mu_{3T}^*,\mu_{1M}^*,\mu_{2M}^*,\mu_{3M}^*,\mu_{1D}^*,\mu_{2D}^*,\mu_{3D}^*) \in U_c \) that minimizes the cost function \ref{cost functional}.
\end{proof}
\subsection{Characterization of the Optimal Control}
 It is crucial to derive the essential conditions for optimal control functions. The characterization of optimal control employs a Pontryagin's Maximum Principle as detailed in Liberzon's work \cite{liberzon2011calculus} from 2011.The Maximum Principle, a fundamental concept in the theory of optimal control that involves the Hamiltonian.  \\
 The Hamiltonian for this problem is given by
\begin{align*}
    &H(U, T_l, T_d, T_t, R, \lambda, \mu_{1T}^*, \mu_{2T}^*, \mu_{3T}^*, \mu_{1M}^*, \mu_{2M}^*, \mu_{3M}^*, \mu_{1D}^*, \mu_{2D}^*, \mu_{3D}^*) \\
    &\quad = L(T_l, T_d, T_t, \mu_{1T}^*, \mu_{2T}^*, \mu_{3T}^*, \mu_{1M}^*, \mu_{2M}^*, \mu_{3M}^*, \mu_{1D}^*, \mu_{2D}^*, \mu_{3D}^*) \\
    &\qquad + \lambda_{U} \frac{dU}{dt} + \lambda_{T_l} \frac{dT_l}{dt} + \lambda_{T_d} \frac{dT_d}{dt} + \lambda_{T_t} \frac{dT_t}{dt} + \lambda_{R} \frac{dR}{dt}
 \end{align*}
  where, $\lambda_c=(\lambda_U, \lambda_{T_l}, \lambda_{T_d}, \lambda_{T_t}, \lambda_R) $ is called as the co-state vector or the adjoint vector and $\lambda_U(T)=0, \lambda_{T_l}(T)=0, \lambda_{T_d}(T)=0,  \lambda_{T_t}(T)=0, \lambda_R(T)=0.$
  Now the canonical equations that relate the state variables to the co-state variables are given by 

 \begin{align}
 \frac{d\lambda_U}{dt} &= -\frac{\partial H}{\partial U} \label{eq:lambda_U} \\
 \frac{d\lambda_{T_l}}{dt} &= -\frac{\partial H}{\partial T_l} \label{eq:lambda_TBI} \\
 \frac{d\lambda_{T_d}}{dt} &= -\frac{\partial H}{\partial T_d} \label{eq:lambda_TBD} \\
 \frac{d\lambda_{T_t}}{dt} &= -\frac{\partial H}{\partial T_t} \label{eq:lambda_TT} \\
 \frac{d\lambda_R}{dt} &= -\frac{\partial H}{\partial R} \label{eq:lambda_R}
 \end{align}

substituting the value of Hamiltonian, we get

\begin{eqnarray}
\frac{d\lambda_U}{dt} &=& \lambda_U (\lambda_1 + \mu_1 + \lambda_2 T_l) - \lambda_{T_l} \lambda_{2} T_l - \lambda_{T_d} \lambda_{1} \label{eq:sec11equ1} \\
\frac{d\lambda_{T_l}}{dt} &=& -\left\{ 1 - \lambda_U \lambda_2 U + \lambda_{T_l} \lambda_{2} U - \lambda_{T_l} (\lambda_3 + \mu_{1T} + \mu_{1M} + \mu_{1D} + \mu_2) + \lambda_{T_d} \lambda_3\right\} \label{eq:sec11equ23} \\
\frac{d\lambda_{T_d}}{dt} &=& -\left\{1 - \lambda_{T_d} (\lambda_4 + \mu_{2T} + \mu_{2M} + \mu_{2D} + \mu_3 + \sigma) +\lambda_{T_t} \lambda_4 \right\} \label{eq:sec11equ35} \\
\frac{d\lambda_{T_t}}{dt} &=& -\left\{1 - \lambda_{T_t} (\lambda_5 + \lambda_6 + \mu_{3T} + \mu_{3M} + \mu_{3D} + \mu_4 ) +\lambda_R \lambda_5 \right\} \label{eq:sec11equ44} \\
\frac{d\lambda_{R}}{dt} &=& \lambda_R (\lambda_7 + \mu_5)-\lambda_{T_d} \lambda_7 \label{eq:sec11equ5}
\end{eqnarray}
along with transversality conditions, 
$\lambda_U(T)=0,\lambda_{T_l}(T)=0,\lambda_{T_d}(T)=0,  \lambda_{T_t}(T)=0, \lambda_R(T)=0.$\\

We will use the Hamiltonian minimization condition to obtain the optimal controls,
\begin{center}
\Large
    \(\frac{\partial H}{\partial u_i} = 0\) at \(u_i = u_i^*\), \small where $u_i$ is any component in the the 9-tuple of optimal control.
\end{center}
Differentiating the Hamiltonian and solving the equations,
we obtain the optimal controls as\newline 
\indent\hspace{10em}$\mu_{1T}^*=\min\{\hspace{0.3 em} \max\{ \frac{\lambda_{T_l}\hspace{0.3 em} T_l}{2A1},\hspace{0.3 em}0\},\hspace{0.3 em} \mu_{1T}max\}$ \newline 
\indent\hspace{10em}$\mu_{2T}^*=\min\{\hspace{0.3 em} \max\{ \frac{\lambda_{T_d}\hspace{0.3 em} T_d}{2A1},\hspace{0.3 em}0\},\hspace{0.3 em} \mu_{2T}max\}$ \newline 
\indent\hspace{10em}$\mu_{3T}^*=\min\{\hspace{0.3 em} \max\{ \frac{\lambda_{T_t}\hspace{0.3 em} T_t}{2A1},\hspace{0.3 em}0\},\hspace{0.3 em} \mu_{3T}max\}$ \newline 
\indent\hspace{10em}$\mu_{1M}^*=\min\{\hspace{0.3 em} \max\{ \frac{\lambda_{T_l}\hspace{0.3 em} T_l}{2A2},\hspace{0.3 em}0\},\hspace{0.3 em} \mu_{1M}max\}$ \newline 
\indent\hspace{10em}$\mu_{2M}^*=\min\{\hspace{0.3 em} \max\{ \frac{\lambda_{T_d}\hspace{0.3 em} T_d}{2A2},\hspace{0.3 em}0\},\hspace{0.3 em} \mu_{2M}max\}$ \newline 
\indent\hspace{10em}$\mu_{3M}^*=\min\{\hspace{0.3 em} \max\{ \frac{\lambda_{T_t}\hspace{0.3 em} T_t}{2A2},\hspace{0.3 em}0\},\hspace{0.3 em} \mu_{3M}max\}$ \newline 
\indent\hspace{10em}$\mu_{1D}^*=\min\{\hspace{0.3 em} \max\{ \frac{\lambda_{T_l}\hspace{0.3 em} T_l}{2A3},\hspace{0.3 em}0\},\hspace{0.3 em} \mu_{1D}max\}$ \newline 
\indent\hspace{10 em}$\mu_{2D}^*=\min\{\hspace{0.3 em} \max\{ \frac{\lambda_{T_d}\hspace{0.3 em} T_d}{2A3},\hspace{0.3 em}0\},\hspace{0.3 em} \mu_{2D}max\}$ \newline 
\indent\hspace{10 em}$\mu_{3D}^*=\min\{\hspace{0.3 em} \max\{ \frac{\lambda_{T_t}\hspace{0.3 em} T_t}{2A3},\hspace{0.3 em}0\},\hspace{0.3 em} \mu_{3D}max\}$ 

\subsection{Numerical Simulations and Studies}
 A numerical simulations is a method to understand the effectiveness of various interventions incorporated in the model. Studying the scenarios via simulations allows to pinpoint the combination of interventions that delivers the best outcomes. This systematic approach enables us to thoroughly evaluate various treatment strategies and provide well-informed recommendations for clinical practice. We evaluate the efficacy of various combinations of controls as follows: 
\begin{enumerate}
    \item Single intervention
    \item Multiple intervention
\end{enumerate}
 Our goal is to propose the most optimal intervention by examining how different controls impact the dynamics of the system described by equations (\ref{begcost})-(\ref{endcost}).
 The parameter values utilized for simulation are sourced from existing literature on TB and are detailed in table \ref{parameters value}.\\

 To begin, the state system using the fourth-order Runge-Kutta method in Python is solved numerically. The initial values of the state variables \cite{TBREPORT}  are set as follows: U(0)=75570, $T_l$(0)=3443, $T_d$(0)=2310, $T_t$(0)=1892 and R(0)=1608.2, each being scaled by a factor of $10^4$ to represent the population in each compartment. The total duration is set to be sixty months (T = 60) or five years. The initial values of the control parameters are all set to zero. With these initial values, we simulate the system with controls and examine the effects of intervention on reducing infected compartments.\\

 To simulate the system with controls, the Forward-Backward Sweep method is employed. At first, the controls are initialized to zero and state system are solved forward in time. Subsequently, the transversality constraints are addressed by solving the adjoint state system backward in time. The optimal state variables and the initial values of the optimal controls, which are also set to zero are utilized.\\
 
 The values of the adjoint state variables are utilized to update the optimal controls iteratively. This process is repeated with the updated control variables until the convergence criterion, as outlined in Liberzon \cite{liberzon2011calculus}, is met. Additionally,  assigning weights to the objective function in accordance with the paper \cite{silvanonreview,silvareview} and is based as A1 = 55, A2 = 30, A3 = 100.

\begin{table}[htpb]
\centering
\resizebox{\textwidth}{!}{
\begin{tabular}{|c|p{5cm}|p{5cm}|}
\hline
\textbf{Parameter} & \textbf{Value} & \textbf{References} \\
\hline
$\lambda_1$ & 0.083 & \cite{silvanonreview} \\
\hline
$\lambda_2$ & 0.0053 & \cite{nkamba2021stability} \\
\hline
$\lambda_3$ & 0.2 & \cite{mayowa, paularodrigues2014cost} \\
\hline
$\lambda_4$ & 0.241 & \cite{nematollahi2020nonlinear} \\
\hline
$\lambda_5$ & 0.10 & \cite{DanyPascal, Erick, silvanonreview} \\
\hline
$\lambda_6$ & 0.0891 & \cite{Sudipta, pardeshi2010time} \\
\hline
$\lambda_7$ & 0.0003 & \cite{malik2018mathematical, mandal2017counting, Erick} \\
\hline
$b$ & 304.17 & \cite{DanyPascal, gao2018optimal} \\
\hline
$\sigma$ & 0.013 & \cite{Erick} \\
\hline
$\mu_1$ & 0.0008 & \cite{nkamba2021stability} \\
\hline
$\mu_2$ & 0.001 & \cite{nkamba2021stability} \\
\hline
$\mu_3$ & 0.001 & \cite{nkamba2021stability} \\
\hline
$\mu_4$ & 0.001 & \cite{nkamba2021stability} \\
\hline
$\mu_5$ & 0.0008 & \cite{nkamba2021stability} \\
\hline
$A1$ & 55 & \cite{rabiu2021optimal} \\
\hline
$A2$ & 30 & \cite{rabiu2021optimal} \\
\hline
$A3$ & 100 & \cite{Erick, sweilam2019optimal} \\
\hline
N(0) & $1.1 \times 10^9$ & \cite{wiki:demographics-india} \\
\hline
U(0) & $7557 \times 10^5$ & \cite{TBREPORT} \\
\hline
$T_l(0)$ & $3443 \times 10^5$ & \cite{TBREPORT} \\
\hline
$T_d(0)$ & $2310 \times 10^4$ & \cite{TBREPORT} \\
\hline
$T_t(0)$ & $1892 \times 10^3$ & \cite{TBREPORT} \\
\hline
$R(0)$& $1608.2 \times 10^2$ & \cite{TBREPORT} \\
\hline
\end{tabular}
}
\caption{Parameter values used in the simulation}
\label{parameters value}
\end{table}

\subsubsection{Without any Interventions}
  A simulation without any intervention serves as the benchmark to compare the effects of the various interventions in the different compartments. Simulation of the model in the absence of intervention over a time period of 5 years gives an insights to the actual dynamics of the different compartments.  As can be seen from figure \ref{ dynamics without Intervention}, the populations of latent TB infection ($T_l$), active TB disease ($T_d$), individuals undergoing treatment ($T_t$), and those who have completed treatment (R) are all increasing over time. This indicates the progression of individuals through various stages of TB infection and treatment. Meanwhile, the population of uninfected individuals (U) is decreasing, reflecting the spread of TB within the population. Having seen the dynamics of each of the compartments, we are ready to  perform further analysis of our model with different interventions . 
\begin{figure}[ht]
    \centering
    \includegraphics[width=18cm, height=6cm]{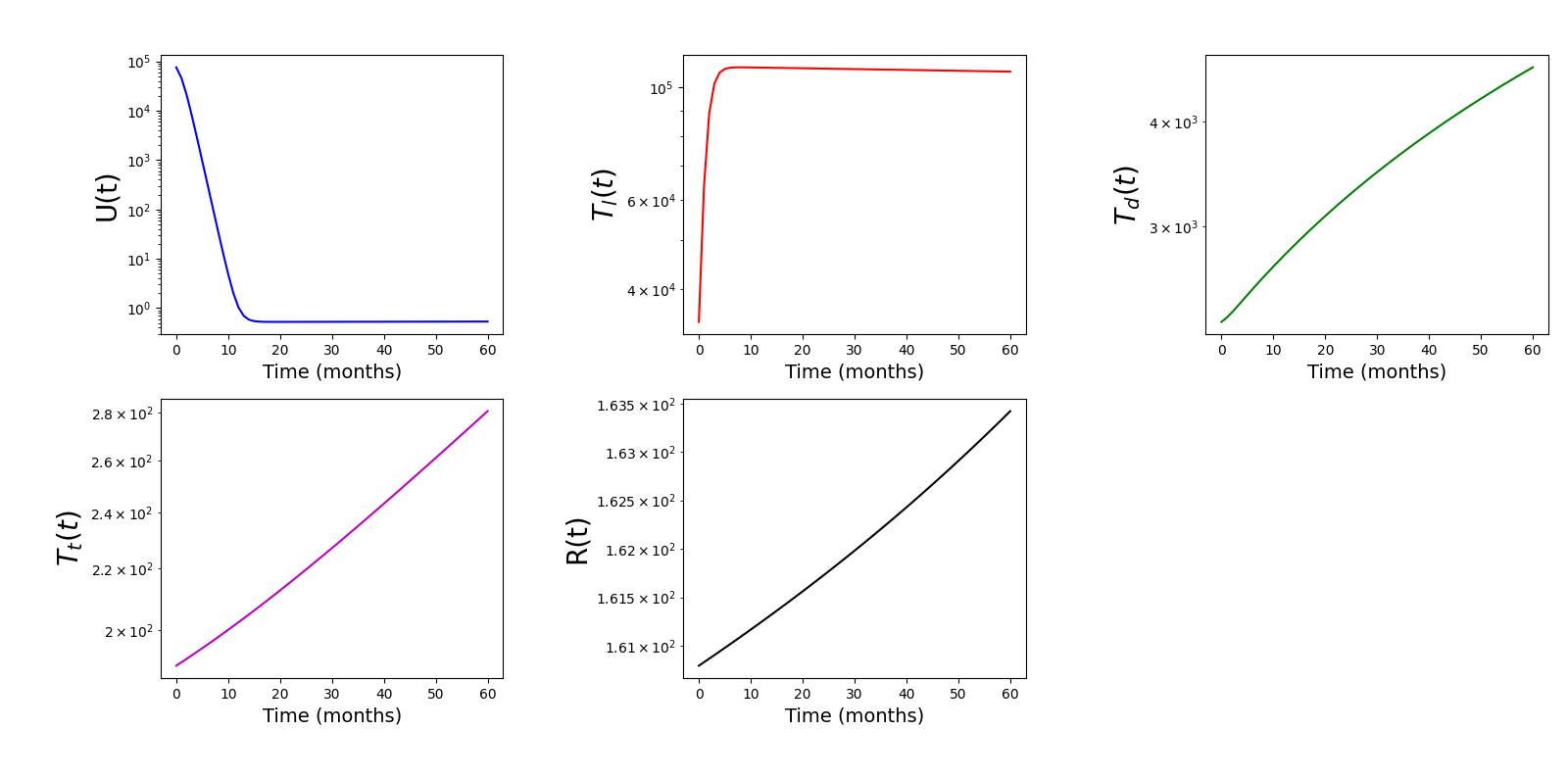}
    \caption{Population dynamics without Intervention 
    }
    \label{ dynamics without Intervention}
\end{figure}

\subsubsection{Single Intervention}
   A simulation spanning over 60 months to examine the behavior of different compartments in tuberculosis dynamics under various interventions applied individually is performed. The compartments include the uninfected population ($U(t)$), latent TB infection ($T_l$), active TB disease  ($T_d$), individuals undergoing treatment ($T_t$), and those who have completed treatment (R). The interventions assessed are Tuberculosis Preventive Treatment (TPT),  Malnutrition (MN), and  Diabetes management (D). \\
   
   Figure \ref{Population dynamics with single intervention} illustrates the temporal evolution of these compartments with each intervention applied separately. Through this analysis, we aim to discern the impact of each intervention on TB dynamics, aiding in the evaluation of their efficacy in mitigating the spread and burden of tuberculosis within the population. Interventions aimed at reducing TB infection rates show an increase in the uninfected population, indicating their preventive efficacy.  Furthermore, the dynamics of the recovered population ($R(t)$) are closely linked to the infected compartments, showing a decrease corresponding to the decrease in the infected population when interventions are assessed. The infected compartments namely $T_l(t)$, $T_d(t)$, and $T_t(t)$ each show a significant decrease in population with the implementation of interventions. Remarkably, the intervention targeting malnutrition (MN) stands out for its significant reduction in the infected population $(T_l, T_d, T_t)$, suggesting its potential effectiveness in curbing the spread and burden of tuberculosis within the population.
\begin{figure}[ht]
    \centering
    \includegraphics[width=18cm, height=6cm]{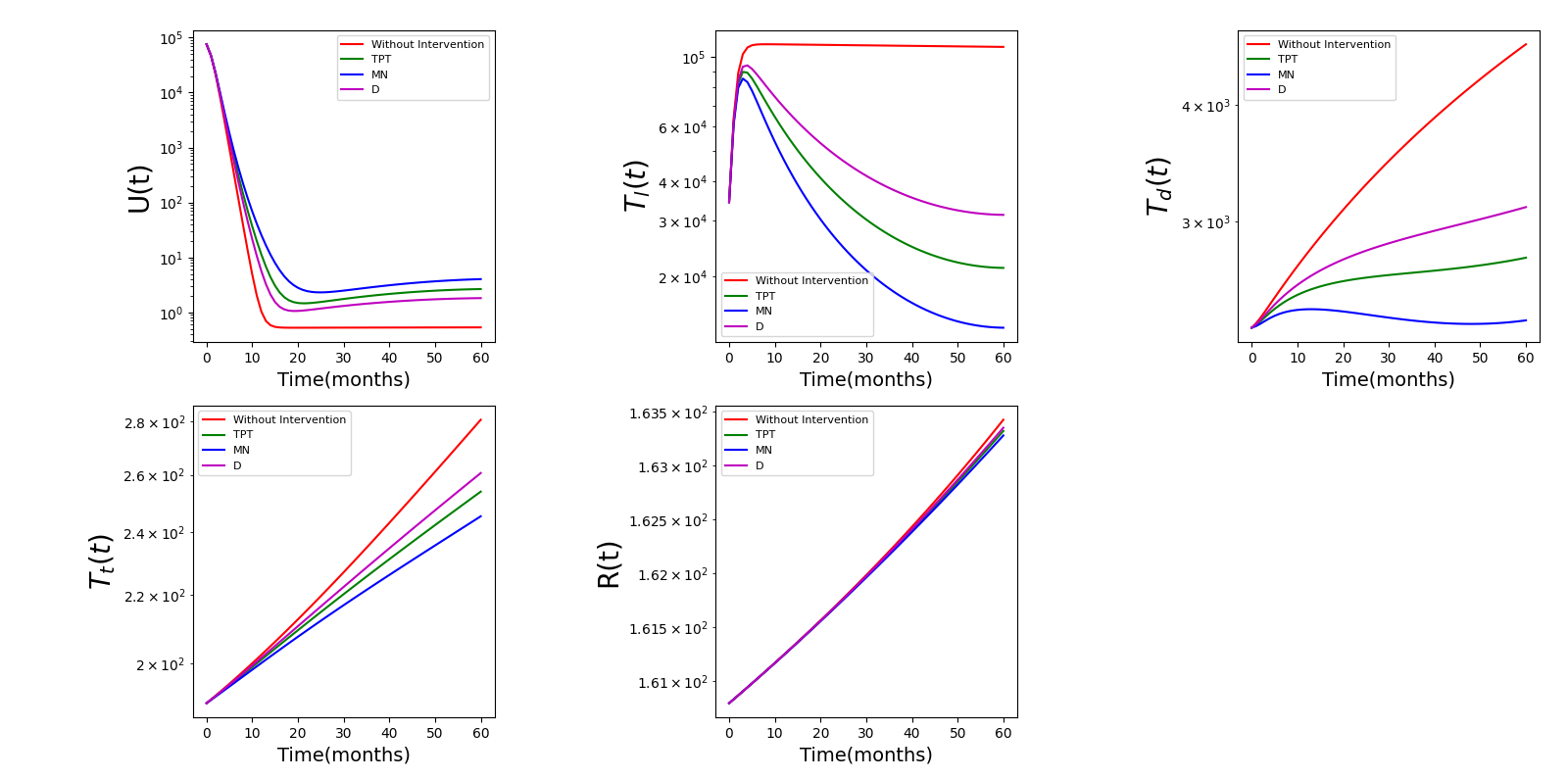}\\
    \caption{Population dynamics with single intervention over the time period of 5 years.}
    \label{Population dynamics with single intervention}
\end{figure}  

\subsubsection{Multiple Intervention}
  A simulation to explore the behavior of various compartments in tuberculosis dynamics under different combinations of interventions is performed. As depicted in figure \ref{Population dynamics with multiple intervention }, the combination of Tuberculosis Preventive Treatment (TPT) and addressing malnutrition (MN) notably reduces the infected and recovered populations while increasing the uninfected population to a greater extent compared to other intervention combinations involving two interventions. Consequently, this combination emerges as the most effective. Furthermore, when all the three interventions are administered simultaneously, they collectively prove to be the most effective in decreasing the infected compartments and boosting the uninfected population. Thus, the combination of all three interventions is deemed the optimal approach for treating patients undergoing tuberculosis treatment.
\begin{figure}[ht]
    \centering
    \includegraphics[width=18cm, height=6cm]{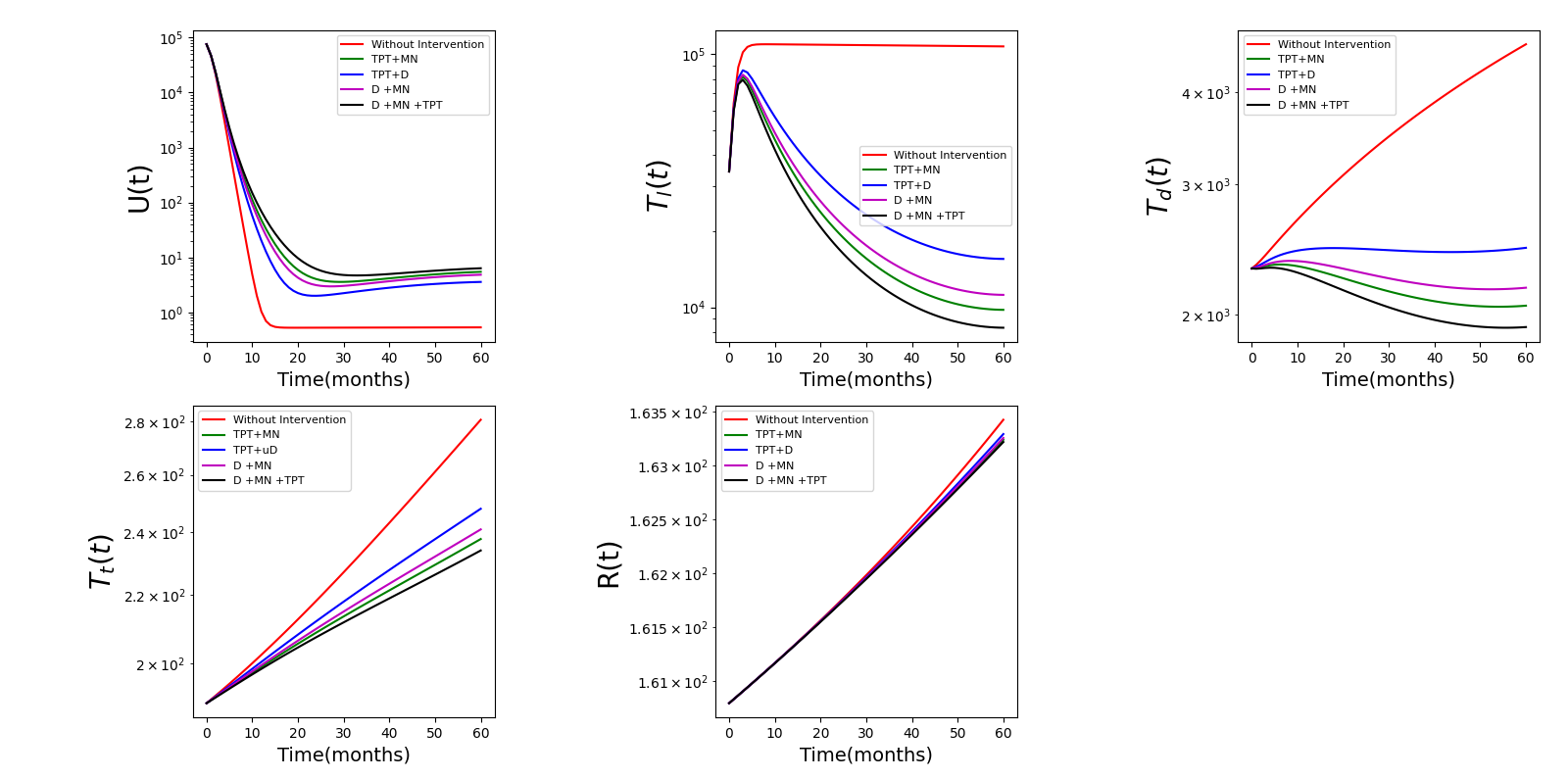}\\
     \caption{Population dynamics with multiple intervention over the time period of 5 years.}
     \label{Population dynamics with multiple intervention }
\end{figure}

\subsection{Optimal Control Strategy}
  The characterization of an optimal control resulted in the following expression 

\indent\hspace{10em}$\mu_{1T}^*=\min\{\hspace{0.3 em} \max\{ \frac{\lambda_{T_l}\hspace{0.3 em} T_l}{2A1},\hspace{0.3 em}0\},\hspace{0.3 em} \mu_{1T}max\}$ \newline 
\indent\hspace{10em}$\mu_{2T}^*=\min\{\hspace{0.3 em} \max\{ \frac{\lambda_{T_d}\hspace{0.3 em} T_d}{2A1},\hspace{0.3 em}0\},\hspace{0.3 em} \mu_{2T}max\}$ \newline 
\indent\hspace{10em}$\mu_{3T}^*=\min\{\hspace{0.3 em} \max\{ \frac{\lambda_{T_t}\hspace{0.3 em} T_t}{2A1},\hspace{0.3 em}0\},\hspace{0.3 em} \mu_{3T}max\}$ \newline 
\indent\hspace{10em}$\mu_{1M}^*=\min\{\hspace{0.3 em} \max\{ \frac{\lambda_{T_l}\hspace{0.3 em} T_l}{2A2},\hspace{0.3 em}0\},\hspace{0.3 em} \mu_{1M}max\}$ \newline 
\indent\hspace{10em}$\mu_{2M}^*=\min\{\hspace{0.3 em} \max\{ \frac{\lambda_{T_d}\hspace{0.3 em} T_d}{2A2},\hspace{0.3 em}0\},\hspace{0.3 em} \mu_{2M}max\}$ \newline 
\indent\hspace{10em}$\mu_{3M}^*=\min\{\hspace{0.3 em} \max\{ \frac{\lambda_{T_t}\hspace{0.3 em} T_t}{2A2},\hspace{0.3 em}0\},\hspace{0.3 em} \mu_{3M}max\}$ \newline 
\indent\hspace{10em}$\mu_{1D}^*=\min\{\hspace{0.3 em} \max\{ \frac{\lambda_{T_l}\hspace{0.3 em} T_l}{2A3},\hspace{0.3 em}0\},\hspace{0.3 em} \mu_{1D}max\}$ \newline 
\indent\hspace{10 em}$\mu_{2D}^*=\min\{\hspace{0.3 em} \max\{ \frac{\lambda_{T_d}\hspace{0.3 em} T_d}{2A3},\hspace{0.3 em}0\},\hspace{0.3 em} \mu_{2D}max\}$ \newline 
\indent\hspace{10 em}$\mu_{3D}^*=\min\{\hspace{0.3 em} \max\{ \frac{\lambda_{T_t}\hspace{0.3 em} T_t}{2A3},\hspace{0.3 em}0\},\hspace{0.3 em} \mu_{3D}max\}
$\\
 From the expression itself, it is difficult to understand how the actual optimal strategy would look like. Therefore, simulation of the trajectory of optimal controls would give a better insights. The equality \\
 \indent\hspace{6em}$\mu_{T}^*$= $\mu_{1T}^*$ $+$ $\mu_{2T}^*$ $+$ $\mu_{3T}^*$, represents the expression for single intervention TPT.\\
 Similarly,\\
 \indent\hspace{6em} $\mu_{T}^*$ $+$ $\mu_{M}^*$= $\mu_{1T}^*$ $+$ $\mu_{2T}^*$ $+$ $\mu_{3T}^*$ $+$ $\mu_{1M}^*$ $+$ $\mu_{2M}^*$ $+$ $\mu_{3M}^*$, represents the expression for multiple intervention TPT and MN .\\
 \indent\hspace{6em} $\mu_{T}^*$ $+$ $\mu_{M}^*$ $+$ = $\mu_{1T}^*$ $+$ $\mu_{2T}^*$ $+$ $\mu_{3T}^*$  $+$ $\mu_{1M}^*$ $+$ $\mu_{2M}^*$ $+$ $\mu_{3M}^*$  $+$ $\mu_{1D}^*$ $+$ $\mu_{2D}^*$ $+$ $\mu_{3D}^*$, represents the expression for multiple intervention TPT, MN and D. In the same manner, the expression for various combination of interventions is obtained.\\

\begin{figure}[ht]
    \centering
    \begin{minipage}[b]{0.48\textwidth}
        \centering
        \includegraphics[width=\textwidth]{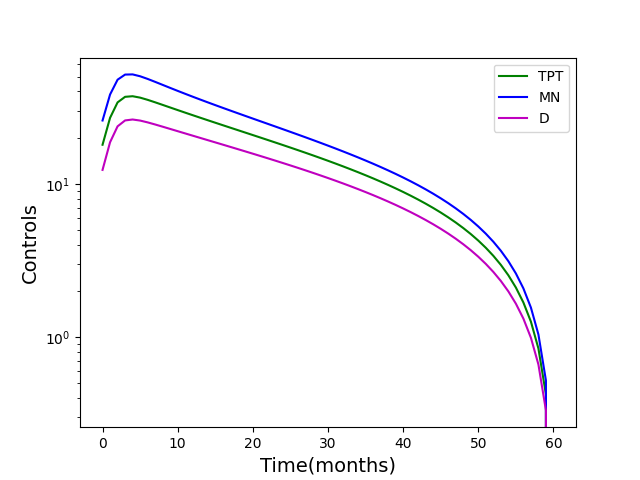}
        \caption{Single intervention }
        \label{Strategy 1}
    \end{minipage}
    \hfill
    \begin{minipage}[b]{0.48\textwidth}
        \centering
        \includegraphics[width=\textwidth]{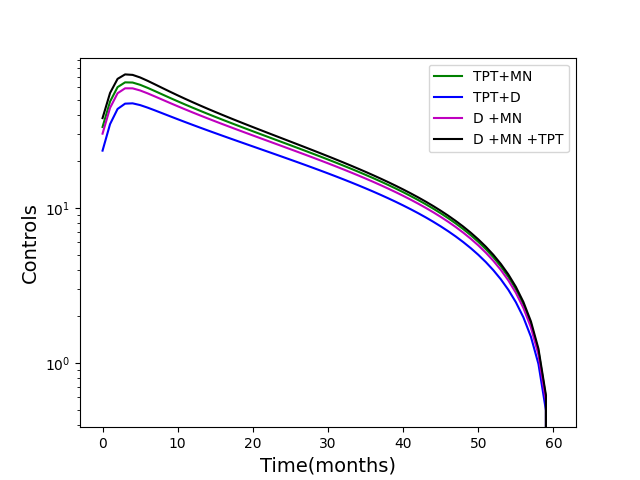}
        \caption{multiple intervention }
          \label{Strategy 2}
    \end{minipage}
\end{figure}

  It can be seen from figure \ref{Strategy 1} and  \ref{Strategy 2}, optimal control strategy for treating tuberculosis demonstrates an initial peak in effectiveness, with the control variable reaching its maximum value at the onset of treatment. This aggressive approach allows for swift and intensive intervention, aiming to rapidly suppress the spread of the disease and mitigate its impact on the population. However, as treatment progresses and the disease is brought under control, the optimal control gradually decreases over time, eventually reaching zero by the end of the intervention period. This tapering-off reflects a shift in focus from acute management to maintenance and consolidation, aligning with the natural course of treatment and the decreasing urgency of the disease burden. By dynamically adjusting the treatment intensity in response to the evolving needs of the patient population, our optimal control strategy aims to optimize treatment outcomes while minimizing unnecessary medication exposure and potential side effects.

\subsection{Population Dynamics }
  Analyzing the average population distribution across different compartments representing various stages of TB progression and treatment response over time for each intervention provide an insights about the effectiveness of interventions.  This comparison helps to understand as to which interventions are most successful in combating tuberculosis.\\

  Table \ref{tab:single_population} and table \ref{tab:multiple_population}  presents the population dynamics of tuberculosis over a 5 year period with single and multiple interventions.  Notably, the combined TPT, MN, and D intervention stands out for its effectiveness in preventing TB transmission, as evidenced by the highest population size among uninfected individuals (U) and considerable reductions in the population of latent TB infection ($T_l$), active TB disease ($T_d$), and individuals undergoing treatment ($T_t$). This suggests a robust impact on TB transmission rates. Additionally, comprehensive strategies exhibit promise in reducing $T_l$ and $T_d$  incidences, underlying the importance of addressing of malnutrition and diabetes in TB management. While treatment rates vary across interventions, recovery rates remain relatively stable ($\approx 164$), indicating the overall efficacy of treatment approaches. \\
  
\begin{table}[htbp]
    \centering
    
    \renewcommand{\arraystretch}{1.5} 
    \begin{tabular}{|c|c|c|c|c|c|c|}
        \hline
        \textbf{Year} & \textbf{Intervention} & {$U$} & \textbf{$T_l$} & \textbf{$T_d$} & \textbf{$T_t$}&  \textbf{$ R$}  \\
        \hline
        \multirow{3}{*} \ {2021} & None & 128.88 & 109478.87 & 2688.50 & 200.03 &161.23 \\
        \cline{2-7}
         & TPT & 182.64 & 64589.28 & 2503.23 & 198.99 &161.23\\
        \cline{2-7}
         & MN & 224.09 & 53964.97 & 2412.27 & 198.23 &161.23\\
        \cline{2-7}
         & D & 160.43 & 74697.17 & 2565.79 & 199.42 &161.23\\
        \hline
        \multirow{3}{*} \ {2022} & None & 0.56 & 109107.82 & 3167.62 & 215.38 &161.28\\
        \cline{2-7}
         & TPT & 1.54 & 38397.30 & 2600.28 & 211.57 &161.28\\
        \cline{2-7}
         & MN & 2.60 & 27907.27 & 2397.41 & 209.41 &161.28\\
        \cline{2-7}
         & D & 1.15 & 50313.20 & 2757.40 & 212.94 &161.28\\
        \hline
        \multirow{3}{*} \ {2023} & None & 0.53 & 108610.73 & 3640.44 & 233.30 &162.23\\
        \cline{2-7}
         & TPT & 1.93 & 27703.70 & 2641.58 & 224.57&162.23 \\
        \cline{2-7}
         & MN & 2.73 & 18760.12 & 2355.05 & 220.58 &162.23\\
        \cline{2-7}
         & D & 1.42 & 38882.60 & 2880.93 & 227.31 &162.23\\
        \hline
        \multirow{3}{*} \ {2024} & None & 0.53 & 108115.91 & 4106.97 & 253.75 &163.23\\
        \cline{2-7}
         & TPT & 2.37 & 22962.99 & 2678.31 & 237.90 &163.23\\
        \cline{2-7}
         & MN & 3.47 & 15001.46 & 2332.52 & 231.84 &163.23\\
        \cline{2-7}
         & D & 1.67 & 33421.05 & 2983.53 & 242.32&163.23 \\
        \hline
        \multirow{3}{*} \ {2025} & None & 0.53 & 107623.37 & 4567.29 & 276.68 &163.29\\
        \cline{2-7}
         & TPT & 2.64 & 21315.92 & 2733.14 & 251.65&163.29 \\
        \cline{2-7}
         & MN & 3.98 & 13745.99 & 2346.17 & 243.44 &163.29\\
        \cline{2-7}
         & D & 1.81 & 31441.81 & 3090.05 & 257.97 &163.29\\
        \hline
    \end{tabular}
    \caption{Average Population in each compartment with Single intervention}
    \label{tab:single_population}
\end{table}

\begin{table}[htbp]
    \centering

    \renewcommand{\arraystretch}{1.2} 
    \begin{tabular}{|c|c|p{2.5cm}|p{2.5cm}|p{2.5cm}|p{2.5cm}|p{2.5cm}|}
        \hline
        \textbf{Year} & \textbf{variables} & \textbf{No Intervention} & \textbf{TPT and MN} & \textbf{TPT and D} & \textbf{MN and D} & \textbf{TPT, MN, and D} \\
        \hline
        \multirow{5}{*} \ 2021  & U & 128.88 & 275.85 & 209.85 & 252.27 & 317.87 \\
        \cline{2-7}
        & $T_l$ & 109478.87 & 46465.91 & 56907.89 & 49413.81 & 42504.27 \\
        \cline{2-7}
        & $T_d$ & 2688.50 & 2325.52 & 2440.70 & 2362.33 & 2279.57 \\
        \cline{2-7}
        & $T_t$ & 200.03 & 197.38 & 198.48 & 197.76 & 196.92 \\
        \cline{2-7}
        & R & 161.17 & 161.17 & 161.17 & 161.17 & 161.17 \\
        \hline
        \multirow{5}{*} \ 2022   & U & 0.56 & 4.98 & 2.17 & 3.72 & 7.87 \\
        \cline{2-7}
        & $T_l$& 109107.82 & 21628.21 & 30618.20 & 23991.98 & 18825.38 \\
        \cline{2-7}
        & $T_d$ & 3167.62 & 2221.78 & 2458.37 & 2294.58 & 2134.65 \\
        \cline{2-7}
        & $T_t$ & 215.38 & 207.13 & 210.11 & 208.12 & 205.93 \\
        \cline{2-7}
        & R & 161.64 & 161.63 & 161.64 & 161.63 & 161.63 \\
        \hline
        \multirow{5}{*}  \ 2023  & U & 0.53 & 3.76 & 2.47 & 3.27 & 4.73 \\
        \cline{2-7}
        & $T_l$ & 108610.73 & 13833.51 & 20987.84 & 15649.23 & 11840.95 \\
        \cline{2-7}
        & $T_d$ & 3640.44 & 2121.70 & 2439.00 & 2217.00 & 2011.33 \\
        \cline{2-7}
        & $T_t$ & 233.30 & 216.64 & 221.84 & 218.33 & 214.60 \\
        \cline{2-7}
        & R & 162.16 & 162.11 & 162.13 & 162.12 & 162.11 \\
        \hline
        \multirow{5}{*} \ 2024   & U & 0.53 & 4.63 & 3.12 & 4.12 & 5.43 \\
        \cline{2-7}
        & $T_l$ & 108115.91 & 10795.32 & 16947.31 & 12329.28 & 9187.31 \\
        \cline{2-7}
        & $T_d$ & 4106.97 & 2062.77 & 2432.00 & 2171.82 & 1939.15 \\
        \cline{2-7}
        & $T_t$& 253.75 & 226.16 & 233.71 & 228.57 & 223.29 \\
        \cline{2-7}
        & R & 162.71 & 162.62 & 162.65 & 162.63 & 162.61 \\
        \hline
        \multirow{5}{*} \ 2025   & U & 0.53 & 5.39 & 3.54 & 4.77 & 6.25 \\
        \cline{2-7}
        & $T_l$ & 107623.37 & 9806.60 & 15583.38 & 11237.86 & 8333.06 \\
        \cline{2-7}
        & $T_d$ & 4567.29 & 2053.87 & 2455.89 & 2171.18 & 1922.35 \\
        \cline{2-7}
        & $T_t$ & 276.68 & 236.07 & 245.93 & 239.17 & 232.43 \\
        \cline{2-7}
        & R & 163.32 & 163.15 & 163.20 & 163.16 & 163.13 \\
        \hline
    \end{tabular}
    \caption{Average Population in each compartment with multiple interventions.}
    \label{tab:multiple_population}
\end{table}

\section {Pseudo-Prevalence and Incidence }
Studying pseudo-prevalence and incidence of tuberculosis  at a country level basis involves using mathematical models. The mathematical model (\ref{begcost})-(\ref{endcost}) is used to approximate the  pseudo-prevalence and  incidence where the parameters values  are used from table \ref{parameters value}.
To calculate the pseudo-prevalence and incidence, the following assumptions are made:
\begin{itemize}
 \item The population of 2021 in India above the age of 15 years is considered to be the initial population N(0) .
 \item  N(0) is considered throughout while calculating the pseudo-prevalence and incidence.
\item  pseudo-prevalence is calculated as the ratio of total number of new cases in the compartments $T_l, T_d$  and $T_t$ i.e. ($T_l(t)+T_d(t)+T_t(t)$) at a time t to the population N(0) .
\item Incidence is calculated as number of new cases in the infected compartment ($T_l(t),T_d(t),T_t(t)$) over the period  of one year divided by N(0) * 365.
\end{itemize}

 Under the assumptions made above, simualtion of pointwise pseudo-prevalence over five years from 2021 to 2025 with single and multiple interventions is represented by figure \ref{fig:prev_sing_mul_plot}. Initially, the the pseudo-prevalence  picks up to the maximum value and then gradually drops down with time,   indicating the effectiveness of the control interventions.

 As per the Global TB Report 2023 \cite{incidence}, the incidence is 199 per 100,000 population in 2022 and 210 per 100,000 in 2021 respectively. Similarly, as reported in \cite{chauhan2023prevalence}, the pooled prevalence for India based on the community-based cohort studies was estimated as 41\% irrespective of the risk of acquiring it, while the estimation was 36\% in 2019-2021. The National Prevalence Survey of India (2019-2021) estimated 31\% tuberculosis infection (TBI) burden among individuals above 15 years of age.\\
 
 Calculation of the total number of new cases ($T_l(t)+T_d(t)+T_t(t)$) at a time t and divided it by  N(0) gives the incidence for the  year 2021, focusing on the age group above 15 years. The incidence was found to be 183.5 per 100,000 population for the year 2021. This indicates that for every 100,000 individuals in the population, there were 183.5 new cases of tuberculosis in 2021. \\

\begin{figure}[ht]
    \centering
    \begin{minipage}[b]{0.45\textwidth}
        \centering
        \includegraphics[width=\textwidth, height=6cm]{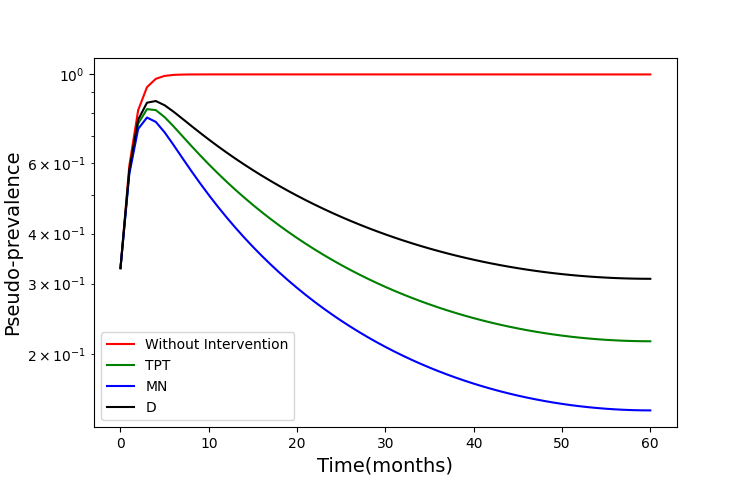} 
        
    \end{minipage}
    \hfill
    \begin{minipage}[b]{0.45\textwidth} 
        \centering
        \includegraphics[width=\textwidth, height=6cm]{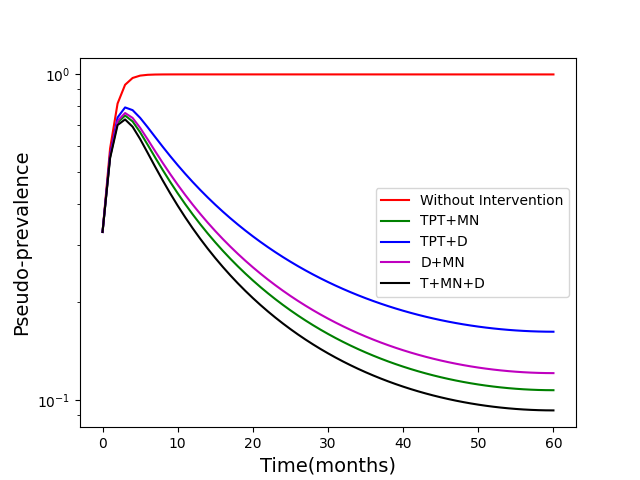}
    \end{minipage}
    \caption{Simulation of pseudo-prevalence with single and multiple interventions over a period of 5 years.}
     \label{fig:prev_sing_mul_plot}
\end{figure}

  Pseudo-prevalence estimates prevalence based on a fixed population size N(0), often assuming stability, whereas actual prevalence accounts for population dynamics N(t), offering insights into how prevalence changes with population fluctuations over time. Indeed, given the similarity between the  pseudo- prevalence and the actual prevalence from table \ref{tab:prevalence with multiple intervention} for the year 2021, leveraging the pseudo-prevalence to project future prevalence appears promising. \\

  Table \ref{tab:prevalence with single intervention} presents the pseudo-prevalence of tuberculosis (TB) across the years 2021 to 2025 with a single intervention strategy. Without any intervention, the prevalence remains relatively high, ranging from 99.8 \% to 99.9\% over the five years. However, the implementation of interventions such as TPT, MN and D significantly reduces the prevalence of TB. MN emerges as the most effective intervention, consistently yielding the lowest prevalence rates, ranging from 46.8\% to 14.5\% across the five years. TPT and D interventions also contribute to lowering prevalence, though to a lesser extent compared to MN.\newline

   Table \ref{tab:prevalence with multiple intervention} provides insights into tuberculosis (TB) prevalence rates with multiple interventions. Notably, a comprehensive strategy combining Tuberculosis Preventive Therapy (TPT), Malnutrition (MN), and Diabetes (D) interventions stands out, consistently yielding the lowest prevalence rates ranging from 9.29\% to 33.63\% over the five-year period.

\begin{table}[htbp]
    \centering
    \renewcommand{\arraystretch}{1.5} 
    \begin{tabular}{|c|c|c|c|c|}
        \hline
        \textbf{Year} & \textbf{No Intervention (\%)} & \textbf{TPT (\%)} & \textbf{MN (\%)} & \textbf{D (\%)} \\
        \hline
        2021 & 99.9 & 56.6 & 46.8 & 66.1 \\ \hline
        2022 & 99.8 & 35.4 & 26.0 & 46.1 \\ \hline
        2023 & 99.8 & 26.6 & 18.5 & 36.7 \\ \hline
        2024 & 99.8 & 22.7 & 15.4 & 32.3 \\ \hline
        2025 & 99.8 & 21.5 & 14.5 & 30.8 \\ 
        \hline
    \end{tabular}
     \caption{Pseudo-prevalence with single intervention}
     \label{tab:prevalence with single intervention}
\end{table}
  
 In summary, the data highlights the superiority of multiple interventions over single intervention in reducing TB prevalence. \\

\begin{table}[htbp]
    \centering
   
    \renewcommand{\arraystretch}{1.5} 
    \begin{tabular}{|c|c|c|c|c|c|}
        \hline
        \textbf{Year} & \textbf{No Intervention (\%)} & \textbf{TPT \& MN (\%)} & \textbf{TPT \& D (\%)} & \textbf{MN \& D (\%)} & \textbf{TPT, MN \& D (\%)} \\
        \hline
        2021 & 99.85 & 37.11 & 46.66 & 39.76 & 33.63 \\ \hline
        2022 & 99.84 & 19.58 & 46.66 & 21.64 & 17.16 \\ \hline
        2023 & 99.82 & 13.68 & 20.13 & 15.33 & 11.89 \\ \hline
        2024 & 99.80 & 11.35 & 17.06 & 12.79 & 9.85 \\ \hline
        2025 & 99.78 & 10.71 & 16.19 & 12.08 & 9.29 \\ 
        \hline
    \end{tabular}
     \caption{Pseudo-prevalence  with multiple interventions}
    \label{tab:prevalence with multiple intervention}
\end{table}
  \section{Cost-Effectiveness Analysis}
  Cost-effectiveness analysis (CEA) serves as a vital tool in identifying interventions or strategies that provide optimal health benefits at minimal costs. It plays a pivotal role in guiding decision-making processes concerning the allocation of resources for public health programs and policies.\\
  
  In our assessment of intervention strategies, we concentrate on three specific population groups: individuals who are latently infected ($T_l$), those with active disease ($T_d$), and those receiving treatment ($T_t$). We evaluate two key aspects: the cost incurred, determined by the area under the cost profile of the intervention strategy, and the health outcomes achieved. Health outcomes are measured as the number of averted cases, which reflects the difference between the number of infected individuals  without any control intervention  and the number of infected individuals  with the implementation of the intervention strategy.\newline
  To quantify the cost incurred, we employ the objective cost functional of our optimal control problem,
 \begin{equation} \label{cost}
 \begin{split}
 \textit{J($U_{T}$, $U_{M}$, $U_{D}$,$T_l$, $T_d$, $T_t$)} = \int_{0}^{T} & \Big[ A1\Big({\mu_{1T}^2} +{\mu_{2T}^2} +{\mu_{3T}^2}\Big) \\
 & + A2\Big({\mu_{1M}^2} +{\mu_{2M}^2} +{\mu_{3M}^2}\Big) + A3\Big({\mu_{1D}^2} +{\mu_{2D}^2} +{\mu_{3D}^2}\Big) \\
 & +T_l(t)+T_d(t)+T_t(t)\Big]dt
 \end{split}
 \end{equation}
 where the  expression in the right hand side of \eqref{cost} represents the cost  associated with all the interventions assesed together at once.
 The last expression in \eqref{cost},
 \begin{equation}  
 \begin{split}
 \textit{J($T_l$, $T_d$, $T_t$)} = \int_{0}^{T} & \Big( T_l(t)+T_d(t)+T_t(t)\Big)dt
 \end{split}
 \end{equation} 
 represents the cost incurred due to disease prevalence.\\
 
 To calculate the cost incurred due to the assessment of TPT alone, we have
 \begin{equation}  \label{costTPT}
 \begin{split}
 \textit{J($U_{T}$,$T_l$, $T_d$, $T_t$)}=\int_{0}^{T} & \Big[ A1\Big({\mu_{1T}^2} +{\mu_{2T}^2} +{\mu_{3T}^2}\Big) +T_l(t)+T_d(t)+T_t(t)\Big]dt
 \end{split}
 \end{equation}
 which is the combination of  cost incurred due to disease prevalence and intervention, TPT. 
 In a similar manner, we  calculate the cost incurred with different combinations of interventions. \\
 
  We have undertaken three different approaches to study cost-effectiveness as described in \cite{agusto2017optimal}: the Average Cost-Effectiveness Ratio (ACER), the Averted Infections Ratio (AIR) and Incremental cost effectiveness ratio (ICER) using the four quadrants of the cost-effectiveness plane. ACER compares the average cost of interventions to the average health outcomes achieved, while AIR measures the ratio of averted infections to the average recovered cases and ICER is an intutive metric to measure the additional cost required to gain  an additional unit of health in comparing two interventions. \\
  
  Table \ref{tab:multiple_interventions} and \ref{tab:single_interventions} presents the total cost incurred and the total number of averted cases with various combination of interventions in 2025.  Cost incurred with intervention D is the highest among all interventions, while intervention MN is associated with the lowest cost. Similarly, the number of averted cases with the intervention TPT, MN, and D together  is greater than any other intervention. In contrast, intervention TPT alone results in the fewest number of averted cases. \\
  
1.$\textbf{ACER (X)} = \frac{\text{Total Cost Incurred by intervention X}}{\text{Total Cases averted by intervention X}}$ \\

  A lower ACER value indicates better cost-effectiveness, meaning that the intervention achieves health outcomes at a lower average cost. Conversely, higher ACER values suggest less cost-effectiveness.\\
  
  As presented in the table \ref{tab:ACER Values for different Interventions}, interventions combining multiple components, such as "TPT and MN" and "MN and D," demonstrate better cost-effectiveness, indicated by lower ACER values. Conversely, interventions targeting single components, like "D," exhibit higher ACER values, suggesting lower cost-effectiveness.\\
 
\begin{table}[ht]
\centering
\begin{tabular}{|c|c|c|c|c|}
\hline
Year & Intervention & Total Cost  & Total Cases Averted \\
\hline

2021 & TPT and MN & 9883267.88 & 32067.47 \\
\cline{2-4}
     & TPT and D & 10254907.52 & 21019.45 \\
\cline{2-4}
     & MN and D & 10005498.34 & 28616.82 \\
\cline{2-4}
     & TPT, MN, and D & 9744866.67 & 35916.55 \\
\hline
\cline{1-1}
2022 & TPT and MN & 17842608.92 & 70067.58 \\
\cline{2-4}
     & TPT and D & 19654368.23 & 56117.19 \\
\cline{2-4}
     & MN and D & 18377039.61 & 66217.41 \\
\cline{2-4}
     & TPT, MN, and D & 17299581.31 & 74116.69 \\
\hline
\cline{1-1}
2023 & TPT and MN & 23478262.76 & 87084.20 \\
\cline{2-4}
     & TPT and D & 26119524.66 & 76221.19 \\
\cline{2-4}
     & MN and D & 24162250.67 & 84245.54 \\
\cline{2-4}
     & TPT, MN, and D & 22878609.26 & 90028.79 \\
\hline
\cline{1-1}
2024 & TPT and MN & 29483936.58 & 95430.95 \\
\cline{2-4}
     & TPT and D & 31597823.35 & 87257.89 \\
\cline{2-4}
     & MN and D & 29863699.65 & 93351.83 \\
\cline{2-4}
     & TPT, MN, and D & 29349132.08 & 97581.45 \\
\hline
\cline{1-1}
2025 & TPT and MN & 37347577.58 & 100105.45  \\
\cline{2-4}
     & TPT and D & 37415260.88 & 93801.88 \\
\cline{2-4}
     & MN and D & 36928638.28 & 98524.16 \\
\cline{2-4}
     & TPT, MN, and D & 38245965.54 & 101743.50 \\
\hline
\end{tabular}
\caption{Total costs and cases avertedwith multiple interventions from 2021 to 2025.}
\label{tab:multiple_interventions}
\end{table}

\begin{table}[htbp]
    \centering
    \begin{minipage}[t]{0.45\textwidth}
        \centering
        \renewcommand{\arraystretch}{1.2} 
        \begin{tabular}{|l|c|}
            \hline
            Intervention & ACER  \\
            \hline
            TPT and MN & 379.66 \\  \hline
            MN and D & 380.46 \\   \hline
            TPT, MN and D & 383.62\\  \hline
            MN & 390.70\\              \hline
            TPT and D & 402.70 \\      \hline
            TPT & 453.71 \\        \hline
            D & 572.67 \\    \hline
        \end{tabular}
        \caption{ACER Values for different Interventions over five years period}
        \label{tab:ACER Values for different Interventions}
    \end{minipage}\hfill
    \begin{minipage}[t]{0.45\textwidth}
        \centering
        \renewcommand{\arraystretch}{1.2} 
        \begin{tabular}{|l|c|}
            \hline
            Intervention & AIR  \\
            \hline
            D & 475.87\\ \hline
            TPT & 540.12 \\ \hline
            TPT and D & 577.01 \\ \hline
            MN & 588.98 \\ \hline
            MN and D & 605.49 \\ \hline
            TPT and MN & 615.04 \\ \hline
            TPT, MN and D & 624.95 \\
            \hline
        \end{tabular}
        \caption{AIR Values for different Interventions over five years period}
        \label{tab:AIR Values for different Interventions}
    \end{minipage}
\end{table}
2.$\textbf{AIR (X)} = \frac{\text{Total Cases Averted by intervention X}}{\text{Total Cases recovered by intervention X}}$\\

 A higher AIR value indicates greater effectiveness in preventing infections relative to the number of cases recovered, while a lower AIR suggests lower effectiveness in averting infections compared to recoveries.\\
 It is clear from  table \ref{tab:AIR Values for different Interventions}, interventions combining multiple components, such as "TPT, MN and D," "TPT and MN," and "MN and D," exhibit higher AIR values, suggesting greater effectiveness in averting infections relative to the number of cases recovered. Conversely, interventions targeting single components, like "D," demonstrate lower AIR values, indicating lower effectiveness in preventing infections compared to the number of cases recovered. \\

 \begin{table}[htbp]
 \centering
 \begin{tabular}{|c|c|c|c|}
 \hline
 Year & Intervention & Tota Cost & Total Cases Averted \\
 \hline
 2021 & TPT & 10439287.17 & 14856.58 \\
  \cline{2-4}   & MN & 10166410.59 & 23806.28 \\
   \cline{2-4}  & D & 10604493.83 & 8877.96 \\
 \hline
 2022 & TPT & 20809428.73 & 45579.35 \\
    \cline{2-4} & MN & 19168838.42 & 60124.14 \\
    \cline{2-4} & D & 22065864.52 & 32103.20 \\
 \hline
 2023 & TPT & 28266438.83 & 66782.44 \\
   \cline{2-4}  & MN & 25323765.89 & 79511.70 \\
   \cline{2-4}  & D & 31041497.18 & 52602.72 \\
 \hline
 2024 & TPT & 34137892.46 & 79635.92 \\
   \cline{2-4}  & MN & 30806969.30 & 89797.01 \\
   \cline{2-4}  & D & 38113476.50 & 67095.43 \\
 \hline
 2025 & TPT & 39565623.05 & 87686.68 \\
   \cline{2-4}  & MN & 36996362.19 & 95786.54 \\
    \cline{2-4} & D & 44051055.14 & 77056.52 \\
 \hline
 \end{tabular}
 \caption{Total costs and cases averted with single interventions from 2021 to 2025.}
 \label{tab:single_interventions}
 \end{table}

3.\textbf{ICER}: Incremental cost effectiveness ratio is a method to measure the economic value of a treatment when compared with the other treatments. It is calculated as a ratio of cost difference between two interventions to the difference in the number of averted cases . An intervention is arranged in an increasing order of their averted cases as shown in \ref{tab:sorted averted cases} and the two competing treatments in terms of averted cases are used in calculating the ICER values as : \\

ICER(D) =  Cost of D/Averted cases with D\\
ICER(TPT) = (Cost of TPT – Cost of D) /(Averted cases with TPT - Averted cases with D)\\
ICER(TPT and D) = (Cost of TPT and D – Cost of TPT) /(Averted cases with TPT and D - Averted cases with TPT) and so on.\\

\begin{table}[htbp]
\centering

\begin{tabular}{|c|c|c|}
\hline
\textbf{Intervention} & \textbf{Cost} & \textbf{Averted Cases} \\\hline
D                     & 44051055.14   & 77056.52 \\\hline
TPT                   & 39565623.05   & 87686.68     \\\hline
TPT and D             & 37415260.88   & 93801.88                \\\hline
MN                    & 36996362.19   & 95786.54                \\\hline

MN and D              & 36928638.28   & 98524.16                \\\hline

TPT and MN            & 37347577.58   & 100105.45               \\\hline

TPT, MN, and D        & 38245965.54   & 101743.50               \\\hline
\end{tabular}
\caption{Costs and Cases averted with different interventions in 2025 in ascending order of Averted cases. }
\label{tab:sorted averted cases}
\end{table}

 \textbf{Four Quadrants Of Cost Effective Analysis (CEA) Plane}: A CEA plane consists of four quadrants i.e. North East (NE), North West (NW), South West (SW), and South East (SE) quadrants. It is a tool to visualise the results of the intervention assessed. It is a graph where the calculated cost of each intervention with the averted cases is plotted and ICER is the metric used to report the results of CEA plane. Averted cases can have general units like quality-adjusted life year (QALY) gained) or units averted. Cost-effectiveness frontier (blue dotted lines) in figure \ref{ThresholdPlot} is a line from one intervention to the next in the increasing order of averted cases. A Cost Effective Threshold (CET) is a red dotted which represents the maximum value one is willing to pay for the additional one unit health outcome (QALY). The cost-effective threshold serves as a benchmark or reference point. If the cost-effectiveness ratio (ICER) falls below this threshold, the intervention is generally considered cost-effective. Conversely, if the ratio is more than the threshold then the  intervention may be  less cost-effective or not cost-effective.

\begin{table}[htbp]
\centering
\label{tab:data}
\begin{tabular}{|c|c|c|c|}
\hline
\textbf{Intervention} & \textbf{Cost} & \textbf{Averted Cases} \\ \hline
D                     & 1.1928   & 1 \\ \hline
TPT                   & 1.0717   & 1.1379  \\\hline
TPT and D             & 1.0131   & 1.2173  \\\hline
MN                    & 1.0018   & 1.2430 \\\hline
MN and D              & 1.0000  & 1.2786  \\\hline
TPT and MN            &1.0113   & 1.2999  \\\hline
TPT, MN, and D        & 1.0356   & 1.3203               \\\hline          
\end{tabular}
\caption{Scaling Cost column by dividing with 36928638.28 and  scaling Averted cases column by dividing with  77056.52}
\label{tab:scaled table1}
\end{table}

 A comparison between the CET line and frontier line is another way to visualise the cost  effectiveness of the interventions. If CET line is steeper than the cost-effectiveness frontier line  of an intervention then an intervention would be considered cost-effective and if intervention with a CEA frontier steeper than the CET would not be considered cost-effective. A general recommendation by policy makers is that an intervention with the highest ICER value that is less or equal to the CET would be recommended. Interventions that will cost more but will be more effective than the current intervention  is looked upon by the policy makers. \\
 
 A hypothetical study in similiar line to  \cite{Thresholdreporting} is performed to understand the cost effectiveness of different interventions. Scaling is done in order to make a visualisation simpler for the study. It is done by selecting the smallest value from each of the columns i.e. Cost and Averted cases and dividing all the entries of each column by their smallest value. Scaling down a Cost column by dividing with 36928638.28 and scaling down Averted cases column by dividing with 77056.52 in table \ref{tab:sorted averted cases}, we obtain table \ref{tab:scaled table2} which also contains the ICER values of each of the intervention. Since, both the cost and averted cases for each of the intervention are positive, all the data points will fall on the   North East quadrant of  CEA plane. Figure \ref{ThresholdPlot} demonstrates the CEA plane with the different interventions plotted using the values from table \ref{tab:scaled table1}.

\begin{table}[htbp]
\centering

\label{tab:data3}
\begin{tabular}{|c|c|c|c|c|}
\hline
\textbf{Intervention} & \textbf{Cost} & \textbf{Averted Cases} & \textbf{ICER} \\ \hline
D                     & 1.1928   &  1         &   1.192                           \\\hline
TPT                   & 1.0717   & 1.1379     &  -0.87                    \\\hline
TPT and D             & 1.0131   & 1.2173     &  -0.73              \\\hline
MN                    & 1.0018   & 1.2430     &  -0.43         \\\hline
MN and D              & 1.0000  & 1.2786      &  -0.05         \\\hline
TPT and MN            &1.0113   & 1.2999      &  0.53            \\\hline
TPT, MN, and D        & 1.0356   & 1.3203     &  1.190       \\\hline
                         
\end{tabular}
\caption{Intervention with their ICER Values}
\label{tab:scaled table2}
\end{table}

\begin{figure}[ht] 
    \centering
    \includegraphics[width=15cm, height=10cm]{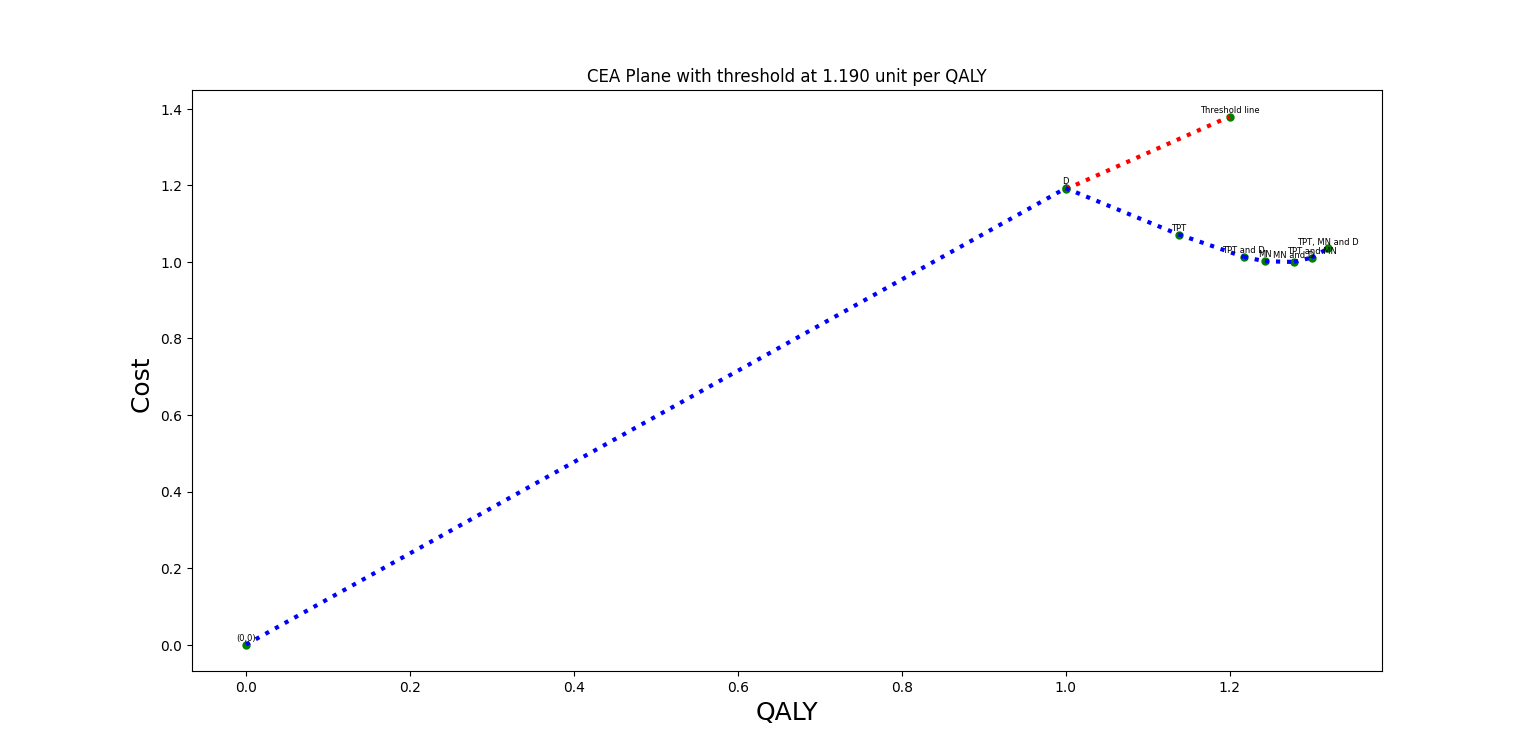}  
    \captionof{figure}{Cost Effectiveness Analysis Plane. }
    \label{ThresholdPlot}
\end{figure}
Hypothetically, we fix the threshold value at 1.190 per QALY. In figure \ref{ThresholdPlot}, frontier lines from origin to D is steeper than CET, indicating  D is not cost effective which is marked by an ICER value of 1.192 that turn out to be greater than CET. This means that D requires more cost than actually one is  willing to pay. Therefore, intervention D is not favourable choice in terms of cost effectiveness with a threshold at 1.190. \\

\begin{figure}[ht] 
    \centering
    \includegraphics[width=15cm, height=10cm]{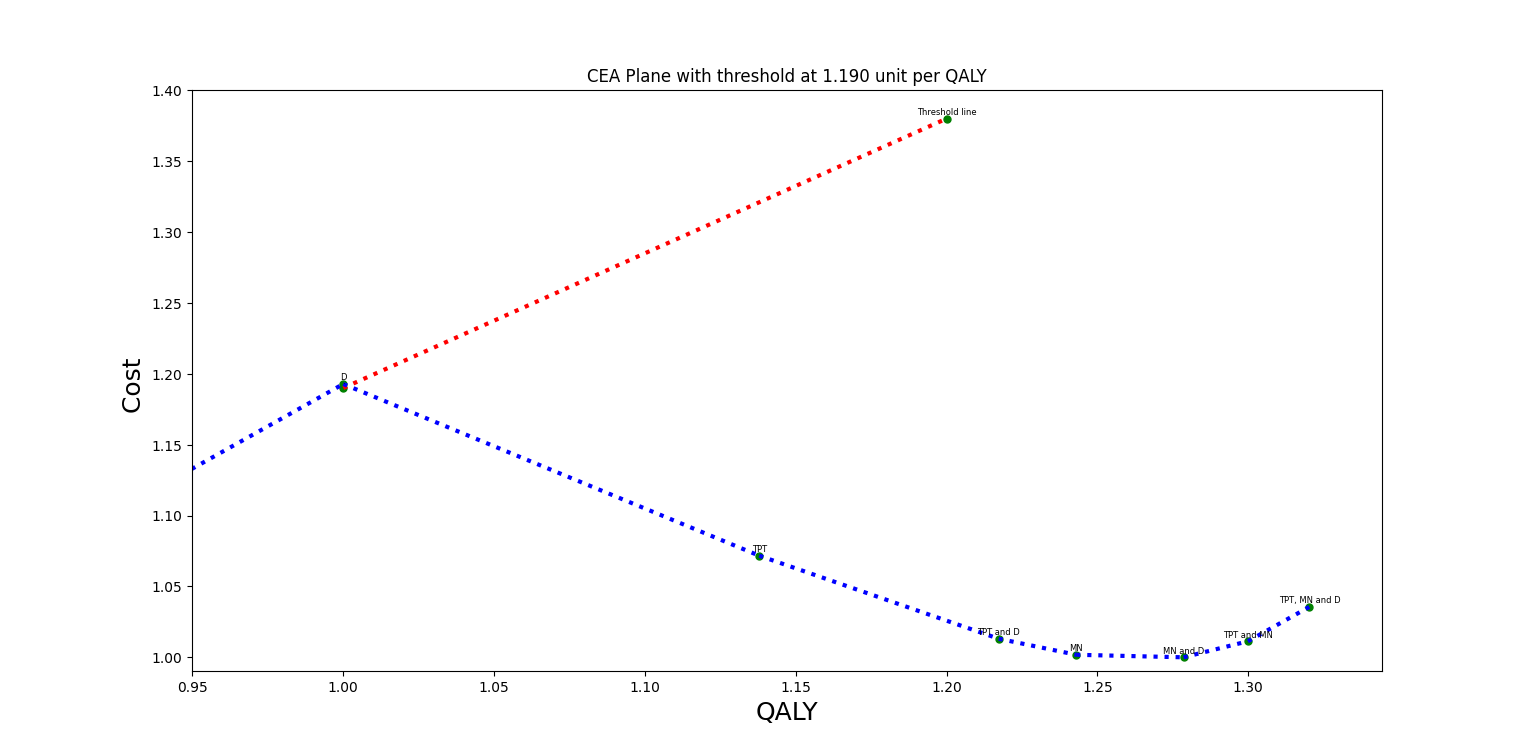}  
    \captionof{figure}{Zoomed Cost Effectiveness Analysis Plane. } 
    \label{ZoomedThresholdPlot}
\end{figure}

  As presented in figure \ref{ZoomedThresholdPlot}, moving from  D to TPT,  TPT to TPT and D, TPT and D to MN and D follow a similar trend where each frontier is shallower than the threshold indicating cost effectiveness. As we move from MN and D to TPT and MN, TPT and MN to TPT,MN and D the frontiers become steeper but still remains shallower than CET. This means that they are costlier than previous interventions but  more cost effective.Therefore, except intervention  D, all the other interventions are cost effective.  Among these interventions, the highest ICER value that is less or equal to the CET (1.190) is that of a multiple intervention TPT, MN and D. Therefore, it is the most cost effective as ICER (TPT, MN and D) is same as threshold.Thus, with this threshold value, one would recommend multiple intervention TPT,MN and D .If we choose threshold to be lesser than 1.190 then we would get frontier of TPT, MN and  D to be steeper than CET and would reject it and look for another intervention X whose frontier would be shallower than CET and ICER standing at or below CET.
 
\section{Discussions and Conclusions}

  In this study, we investigated the efficacy of TPT, MN, and D treatments in treating TB, both when used independently and in combination. The investigation employs two primary methodologies. We employed a thorough approach to these therapies as control measures, which included an extensive review of the optimal control problem. This involves an in-depth understanding of the dynamics of each compartment over the years. Next, we performed a thorough analysis to determine the effectiveness of various combinations of therapies in reducing the occurrence of tuberculosis infection. The data demonstrated that the greatest decrease in the average polluted area was observed when all three interventions were applied simultaneously. This indicates a promising approach to controlling tuberculosis. \\

  Furthermore, in order to assess the effectiveness of each intervention, a cost-effectiveness analysis was conducted. The Average Cost-Effectiveness Ratio (ACER) and Additional Incremental Ratio (AIR) values were calculated for all possible intervention combinations. In order to determine the optimal intervention, a hypothetical cost-effectiveness analysis was conducted. This analysis utilized the four quadrants of the CEA (Cost-Effectiveness Analysis) plane, along with the threshold and ICER (Incremental Cost-Effectiveness Ratio). The implementation of this rigorous methodology guarantees the dependability and replicability of our findings.  \\

  Based on the findings of both the optimal control and comparative effectiveness studies, it can be concluded that the administration of all drugs, whether individually or in combination, results in a substantial decrease in the population of infected compartments and a significant increase in the population of uninfected compartments. Similarly, when calculating the cost-effectiveness of an intervention using the ICER values and the four corners of the CEA plane, the intervention with the lowest total cost is totally determined by the Cost-effectiveness Threshold value. 
 \\

Some of the studies on tuberculosis where the effectiveness of the control strategies are studied can be found in the literature \cite{olaniyi2023fractional, ullah2020optimal, yusuf2023effective}.
The optimal combination of prevention and treatment for co-infection of HIV and TB is documented in the literature \cite{awoke2018optimal}. Despite the fact that there are studies that have been conducted to strategize the ideal control for tuberculosis using various interventions, the combination that was taken into consideration in our study may be one of a kind or extremely uncommon for country-specific situations.
The selection of weight constants linked to control variables can have a substantial impact on the behavior of the control. The primary goal of this study is to investigate the role and efficacy of combined control intervention strategy. Therefore, we will not delve into the examination of the effects of different weight constants. \\

In conclusion, it is our contention that the most optimal approach involves the simultaneous evaluation of all three interventions, as this has been demonstrated to be the most efficacious in reducing infection. It is imperative to acknowledge that the implementation of this integrated methodology may result in potential adverse effects on various bodily organs. Even though there are risks, this plan makes sense when you compare it to problems like kidney damage, heart problems, a weakened immune system, and vision issues that can happen because of diabetes and poor nutrition. In both scenarios, the foremost priority revolves around the patient's survival and overall welfare.

\section*{Data Availability Statement (DAS)}

We do not analyse or generate any datasets, because our work proceeds within a theoretical and mathematical approach.

\section*{Declarations}
The authors declare no Conflict of Interest for this research work.

\section*{Ethics Statement} This research did not require any ethical approval.

\section*{Acknowledgments}
The authors dedicate this paper to the founder chancellor of SSSIHL,  Bhagawan Sri Sathya Sai Baba . The first author dedicate this paper to his loving mother, Manuka Chhetri and brother, Martyr Bikram Chhetri. 
The corresponding author also dedicates this paper to his loving elder brother D. A. C. Prakash who still lives in his heart. The author extends special thanks to Dr. Bishal Chhetri and Mr. Bhanu Praksh.

\printbibliography
\nocite{*}
\end{document}